\providecommand{\U}[1]{\protect\rule{.1in}{.1in}}
\theoremstyle{plain}
\newtheorem{definition}{Definition}
\newtheorem{lemma}{Lemma}
\newtheorem{theorem}{Theorem}
\numberwithin{equation}{section}
\begin{document}
\title[Subcritical and critical growth without the AR condition]{N-Laplacian equations in $%
\mathbb{R}
^{N}$ with subcritical and critical growth without the Ambrosetti-Rabinowitz condition.}
\author{Nguyen Lam}
\author{Guozhen Lu}
\address{Nguyen Lam and Guozhen Lu\\
Department of Mathematics\\
Wayne State University\\
Detroit, MI 48202, USA\\
Emails: nguyenlam@wayne.edu and gzlu@math.wayne.edu}
\thanks{Corresponding Author: G. Lu at gzlu@math.wayne.edu}
\thanks{Research is partly supported by a US NSF grant \#DMS0901761.}
\date{}
\subjclass{35B38, 35J92, 35B33, 35J62}
\keywords{Mountain pass theorem, critical point theory, Ambrosetti-Rabinowitz condition,
Moser-Trudinger inequality, subcritical and critical exponential growth.}
\dedicatory{ }
\begin{abstract}
Let $\Omega$ be a bounded domain in $%
\mathbb{R}
^{N}$. In this paper, we consider the following nonlinear elliptic equation of
$N$-Laplacian type:
\begin{equation}
\left\{
\begin{array}
[c]{l}%
-\Delta_{N}u=f\left(  x,u\right) \\
u\in W_{0}^{1,2}\left(  \Omega\right)  \setminus\left\{  0\right\}
\end{array}
\right.  \label{0.1}%
\end{equation}
when $f$ is of subcritical or critical exponential growth. This nonlinearity
is motivated by the Moser-Trudinger inequality. In fact, we will prove the
existence of a nontrivial nonnegative solution to (\ref{0.1}) without the
Ambrosetti-Rabinowitz $(AR)$ condition. Earlier works in the literature on the
existence of nontrivial solutions to $N-$Laplacian in $\mathbb{R}^{N}$ when
the nonlinear term $f$ has the exponential growth only deal with the case when
$f$ satisfies the $(AR)$ condition. Our approach is based on a suitable
version of the Mountain Pass Theorem introduced by G. Cerami \cite{Ce1, Ce2}.
This approach can also be used to yield an existence result for the
$p$-Laplacian equation ($1<p<N$) in the subcritical polynomial growth case.

\end{abstract}
\maketitle

\section{Introduction}

Let $\Omega$ be a bounded smooth domain in $%
\mathbb{R}
^{N}$ and we consider the following class of nonlinear elliptic equations
\begin{equation}
\left\{
\begin{array}
[c]{l}%
-\Delta_{p}u=f\left(  x,u\right)  \text{ in }\Omega,\\
u\in W_{0}^{1,p}\left(  \Omega\right)  \setminus\left\{  0\right\}
\end{array}
\right.  \label{1.1}%
\end{equation}
where $-\Delta_{p}u=-div\left(  |\nabla u|^{p-2}\nabla u\right)  $ is the
$p-$Laplacian. It is well known that problems involving the $p-$Laplacian
appear in many contexts. Some of these problems come from different areas of
applied mathematics and physics. For example, they may be found in the study
of non-Newtonian fluids, nonlinear elasticity and reaction-diffusions. 
The main purpose of this paper is to establish existence results of nontrivial nonnegative solutions to the above problem of $N-$Laplacian when the nonlinear term $f$ has the exponential growth but without satisfying the Ambrosetti-Rabinowitz condition.
In these cases, the original version of the Mountain Pass Theorem of Ambrosetti-Rabinowitz \cite{AR, R} is not sufficient for our purpose. Therefore, we will adapt a suitable version of
Mountain Pass Theorem introduced by Cerami \cite{Ce1, Ce2} to accomplish our goal. Our approach also yields an existence result of nontrivial nonnegative solutions when $1<p<N$ and $f$ satisfies a certain subcritical polynomial growth condition weaker than those in the literature.

In the case $p=N$, motivated by the Trudinger-Moser inequality (see Lemma 3),
existence of nontrivial solutions to $N-$Laplacian when $f$ has the
exponential growth have been studied by many authors. See for example,
Carleson-Chang \cite{CC}, Atkinson-Peletier \cite{AP2}, Adimurthi et al
\cite{A, AP1, AS1, AS2, AY1, AY2}, Marcos Do O et al \cite{Mdo, MdoS, O, OEU},
de Figueiredo et al \cite{FDR, FMR}, etc. using the classical Critical Point
Theory first developed by Ambrosetti-Rabinowitz in their celebrated work
\cite{AR}, see also \cite{R}. The key issue in using such a theory is the
verification of conditions which allow the use of the Palais-Smale condition.

When $1<p<N$, there have been substantial amount of works to study the
existence of the nontrivial solution for (\ref{1.1}). Nevertheless, almost all
of the works involve the nonlinear term $f(x,u)$ of a subcritical (polynomial)
growth, say,

$(SCP):\,\,~$There exist positive constants $c_{1}$ and $c_{2}$ and $q_{0}%
\in\left(  p-1,p^{\ast}-1\right)  $ such that
\[
0\leq f(x,t)\leq c_{1}+c_{2}t^{q_{0}}\text{ for all }t\geq0\text{ and }%
x\in\overline{\Omega}%
\]
where $p^{\ast}=Np/(N-p)$ denotes the critical Sobolev exponent. In this case,
we can treat the problem (\ref{1.1}) variationally in the Sobolev space
$W_{0}^{1,p}\left(  \Omega\right)  $ thanks to the standard Mountain Pass
Theorem. Since Ambrosetti and Rabinowitz proposed the Mountain-pass Theorem in
their celebrated paper \cite{AR}, critical point theory has become one of the
main tools for finding solutions to elliptic equations of variational type.
Indeed, if we define the Euler-Lagrange function associated to problem
(\ref{1.1}):%

\begin{align*}
J  &  :W_{0}^{1,p}\left(  \Omega\right)  \rightarrow%
\mathbb{R}%
\\
J(u)  &  =\frac{1}{p}\int_{\Omega}\left\vert \nabla u\right\vert ^{p}%
dx-\int_{\Omega}F(x,u)dx
\end{align*}
where%
\[
F(x,u)=%
{\displaystyle\int\limits_{0}^{u}}
f(x,s)ds
\]
then the critical point of $J$ are precisely the weak solutions of problem
(\ref{1.1}). One of the main conditions that appeared in many works is the
so-called Ambrosetti-Rabinowitz condition:

$(AR):\,\,$ There are constants $\theta>p$ and $s_{0}>0$ such that%
\[
0<\theta F(x,s)\leq sf(x,s),~\left\vert s\right\vert \geq s_{0},~\forall
x\in\Omega
\]
In fact, the $(AR)$ condition is quite natural and plays an important role in
studying problem (\ref{1.1}), for example, it ensures the boundedness of the
Palais-Smale sequence. On the other hand, this condition is very restrictive
and eliminates many interesting and important nonlinearities. We recall that
$(AR)$ condition implies another weaker condition%

\[
f\text{ is p-superlinear at infinity, i.e., }\underset{n\rightarrow\infty
}{\lim}\frac{f(x,t)}{\left\vert t\right\vert ^{p-1}}=+\infty,\text{ uniformly
in }x\in\Omega.
\]
However, there are many functions which satisfy the p-superlinearity at
infinity, but do not satisfy the $(AR)$ condition. An example of such
functions is
\[
f(x,t)=\left\vert t\right\vert ^{p-2}t\log(1+\left\vert t\right\vert ).
\]

Over the years, many researchers studied problem (\ref{1.1}) by trying to drop
the $(AR)$ condition, see for instance \cite{ILU, J, LL, LW, LWZ, LY, LZ, MS,
PR, S, SZ, WZ, Z}. For example, the following assumption has been studied by
many authors:%

\[
\frac{f(x,t)}{\left\vert t\right\vert ^{p-1}}\text{ is non-decreasing with
respect to }\left\vert t\right\vert
\]
(see \cite{LZ, MS, SZ} and references therein). Recently, the authors of
\cite{FL} have used the following condition:%
\[
\text{There exists }\theta\geq1\text{ such that }\theta G(x,t)\geq
G(x,st)\text{ for all }\left(  x,t\right)  \in\Omega\times%
\mathbb{R}
\text{ and }s\in\left[  0,1\right]
\]
where $G(x,t)=f(x,t)t-pF(x,t)$, to compute the critical groups of the
functional $J$ at infinity, and obtain one nontrivial solution of (\ref{1.1}).
This condition was first introduced by Jeanjean \cite{J}, and then was used by
numerous authors, for example, \cite{LW, LY, MS, S, WZ}.

We note  that except in \cite{LW}, the other authors assumed the condition
$(SCP)$ in their works in order to get the existence results. One of the main
reasons to assume this condition (SCP) is that they can use the Sobolev
compact embedding $W_{0}^{1,p}\left(  \Omega\right)  \hookrightarrow
L^{q}\left(  \Omega\right)  $, $1\leq q<p^{\ast}$.

In this paper, our first main result will be to study problem (\ref{1.1}) in
the improved subcritical polynomial growth
\[
(SCPI): \,\, \underset{s\rightarrow+\infty}{\lim}\frac{f\left(  x,s\right)
}{\left\vert s\right\vert ^{p^{\ast}-1}}=0
\]
which is much weaker than $(SCP)$. Note that in this case, we don't have the
Sobolev compact embedding anymore. Our work again is without the
$(AR)-$condition. In fact, this condition was studied by Liu and Wang in \cite{LW} in the case
of Laplacian (i.e., $p=2$) by the Nehari manifold approach. However, we will
show that we can use a suitable version of the Mountain Pass Theorem to get
the nontrivial solution to (\ref{1.1}) in the general case $1<p<N$. This
result is stronger than those in \cite{ILU, LY, MS, S}.

Let us now state our result: Consider the problem:
\begin{equation}
\left\{
\begin{array}
[c]{l}%
-\Delta_{p}u=f\left(  x,u\right)  \text{ in }\Omega,\\
u\in W_{0}^{1,p}\left(  \Omega\right)  \setminus\left\{  0\right\} \\
u\geq0
\end{array}
\right.  \tag{P}\label{1.2}%
\end{equation}

Suppose that

$(L1):\,\, ~f:\Omega\times%
\mathbb{R}
\rightarrow%
\mathbb{R}
$ is continuous, $f\left(  x,u\right)  \geq0,~\forall\left(  x,u\right)
\in\Omega\times\left[  0,\infty\right)  $ and $f\left(  x,u\right)
=0,~\forall\left(  x,u\right)  \in\Omega\times\left(  -\infty,0\right]  $.

$(L2):\,\, ~\underset{u\rightarrow+\infty}{\lim}\frac{F\left(  x,u\right)
}{u^{p}}=+\infty$ uniformly on $x\in\Omega$ where $F(x,u)=\int\limits_{0}%
^{u}f(x,t)dt.$

$(L3):\,\, $ There is $C_{\ast}\geq0,~\theta\geq1$ such that $H(x,t)\leq\theta
H(x,s)+C_{\ast}$ for all $0<t<s,~\forall x\in\Omega$ where
$H(x,u)=uf(x,u)-pF(x,u).$

$(L4):\,\,$ $\underset{u\rightarrow0+}{\lim\sup}\frac{pF\left(  x,u\right)
}{\left\vert u\right\vert ^{p}}<\lambda_{1}\left(  \Omega\right)  $, uniformly
on $x\in\Omega.$

where
\[
\lambda_{1}\left(  \Omega\right)  =\inf\left\{  \frac{\int\left\vert \nabla
u\right\vert ^{p}dx}{\int\left\vert u\right\vert ^{p}dx}:~u\in W_{0}%
^{1,p}\left(  \Omega\right)  \setminus\left\{  0\right\}  \right\}
\]
then

\begin{theorem}
Let $1<p<N$ and assume that $f$ has the improved subcritical polynomial growth
on $\Omega$ (condition $(SCPI)$) and satisfies $(L1),~(L2),~(L3)$ and $(L4)$.
Then, problem (\ref{1.2}) has a nontrivial solution.
\end{theorem}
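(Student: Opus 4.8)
The plan is to apply Cerami's version of the Mountain Pass Theorem to the Euler--Lagrange functional
\[
J(u)=\frac{1}{p}\int_{\Omega}|\nabla u|^{p}\,dx-\int_{\Omega}F(x,u)\,dx,\qquad u\in W_{0}^{1,p}(\Omega),
\]
whose critical points are weak solutions of \eqref{1.2} (and are automatically nonnegative by $(L1)$, since testing against $u^{-}$ forces $u^{-}=0$). I would first check the mountain pass geometry. Near the origin, $(L4)$ gives constants $\mu<\lambda_{1}(\Omega)$ and (using $(SCPI)$ to control the superquadratic remainder via the Sobolev embedding $W_{0}^{1,p}\hookrightarrow L^{p^{\ast}}$) an estimate $F(x,u)\le \frac{\mu}{p}|u|^{p}+C|u|^{p^{\ast}}$, so that
\[
J(u)\ge \frac{1}{p}\Bigl(1-\frac{\mu}{\lambda_{1}(\Omega)}\Bigr)\|\nabla u\|_{p}^{p}-C\|\nabla u\|_{p}^{p^{\ast}},
\]
which is bounded below by a positive constant $\rho>0$ on a small sphere $\|\nabla u\|_{p}=r$. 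For the far end, fix any $\varphi\ge 0$, $\varphi\neq 0$ in $W_{0}^{1,p}$; condition $(L2)$ (superlinearity of $F/u^{p}$, via Fatou) gives $J(t\varphi)\to-\infty$ as $t\to+\infty$, so there is $e$ with $J(e)<0$. Define $c=\inf_{\gamma\in\Gamma}\max_{t\in[0,1]}J(\gamma(t))>0$ over paths joining $0$ to $e$.

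The main obstacle — and the reason $(AR)$ is replaced by $(L2)$--$(L3)$ — is verifying the Cerami condition $(C)_{c}$: every sequence $(u_{n})$ with $J(u_{n})\to c$ and $(1+\|u_{n}\|)\|J'(u_{n})\|\to 0$ has a convergent subsequence. I would first prove such a sequence is bounded. From $J(u_{n})\to c$ and $\langle J'(u_{n}),u_{n}\rangle\to 0$ one gets $\int_{\Omega}H(x,u_{n})\,dx=pJ(u_{n})-\langle J'(u_{n}),u_{n}\rangle\to pc$, where $H(x,u)=uf(x,u)-pF(x,u)$. Suppose $\|\nabla u_{n}\|_{p}\to\infty$ and set $w_{n}=u_{n}/\|\nabla u_{n}\|_{p}$, so $\|\nabla w_{n}\|_{p}=1$ and, up to a subsequence, $w_{n}\rightharpoonup w$ in $W_{0}^{1,p}$, $w_{n}\to w$ in $L^{p}$ and a.e. If $w\not\equiv 0$, then on the set $\{w>0\}$ we have $u_{n}\to+\infty$, and $(L2)$ forces $\int_{\Omega}\frac{F(x,u_{n})}{\|\nabla u_{n}\|_{p}^{p}}\,dx\to+\infty$ by Fatou; but $\frac{1}{p}-\frac{1}{\|\nabla u_{n}\|_{p}^{p}}\int_{\Omega}F(x,u_{n})\,dx=J(u_{n})/\|\nabla u_{n}\|_{p}^{p}\to 0$, a contradiction. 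If $w\equiv 0$, I would use the classical Jeanjean-type trick: choose $t_{n}\in[0,1]$ with $J(t_{n}u_{n})=\max_{t\in[0,1]}J(tu_{n})$; for any fixed $R>0$, since $\|\nabla(\sqrt[p]{pR}\,w_{n})\|_{p}^{p}=pR$ and $w_{n}\to 0$ in $L^{p}$ (hence $\int F(x,\sqrt[p]{pR}\,w_{n})\to 0$ by $(SCPI)$ and the $L^{q}$-convergence for $q<p^{\ast}$, splitting the integrand into a sublevel part controlled by $(L4)$/continuity and a part dominated by $|w_{n}|^{p^{\ast}}$ with small coefficient — here a careful truncation argument is needed), we get $J(t_{n}u_{n})\ge J(\sqrt[p]{pR}\,w_{n})\to R$, so $J(t_{n}u_{n})\to+\infty$. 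Then $t_{n}\in(0,1)$ for large $n$, $\langle J'(t_{n}u_{n}),t_{n}u_{n}\rangle=t_{n}\frac{d}{dt}\big|_{t_{n}}J(tu_{n})\cdot\frac{1}{?}=0$, hence $\int_{\Omega}H(x,t_{n}u_{n})\,dx=pJ(t_{n}u_{n})\to+\infty$; but $(L3)$ with $0<t_{n}u_{n}<u_{n}$ gives $\int_{\Omega}H(x,t_{n}u_{n})\,dx\le\theta\int_{\Omega}H(x,u_{n})\,dx+C_{\ast}|\Omega|\to \theta pc+C_{\ast}|\Omega|<\infty$, a contradiction. Therefore $(u_{n})$ is bounded.

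Given boundedness, I would pass to $u_{n}\rightharpoonup u$ in $W_{0}^{1,p}$ and obtain strong convergence. Since $f$ has subcritical polynomial growth $(SCPI)$, the Nemytskii operator $u\mapsto f(x,u)$ maps bounded sets of $W_{0}^{1,p}$ into bounded sets of $L^{(p^{\ast})'}$-type spaces, and using $u_{n}\to u$ in $L^{q}$ for all $q<p^{\ast}$ one shows $\int_{\Omega}f(x,u_{n})(u_{n}-u)\,dx\to 0$; then $\langle J'(u_{n}),u_{n}-u\rangle\to 0$ yields $\int_{\Omega}|\nabla u_{n}|^{p-2}\nabla u_{n}\cdot\nabla(u_{n}-u)\,dx\to 0$, and the standard $(S_{+})$ property of the $p$-Laplacian gives $u_{n}\to u$ strongly in $W_{0}^{1,p}$. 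Hence $J(u)=c>0$, so $u\neq 0$, and $J'(u)=0$, so $u$ is a nontrivial (nonnegative) weak solution of \eqref{1.2}. The step I expect to be most delicate is the $w\equiv 0$ case above — specifically, making the truncation/splitting argument rigorous so that $\int_{\Omega}F(x,\sqrt[p]{pR}\,w_{n})\,dx\to 0$ despite the lack of compactness at the critical exponent — together with the bookkeeping on $t_{n}$ and the correct chain-rule identity $\frac{d}{dt}J(tu_{n})=\frac{1}{t}\langle J'(tu_{n}),tu_{n}\rangle$ used to convert the maximality of $t_{n}$ into the vanishing of $\langle J'(t_{n}u_{n}),t_{n}u_{n}\rangle$.
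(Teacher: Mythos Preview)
Your proposal is correct and follows essentially the same route as the paper: mountain-pass geometry from $(L4)$ and $(SCPI)$ together with $(L2)$, boundedness of Cerami sequences via the Jeanjean $t_{n}$-trick combined with $(L3)$, and strong convergence from $(SCPI)$ plus the $(S_{+})$-type inequality for the $p$-Laplacian. For the step you flag as delicate, the paper's clean device is to invoke $(SCPI)$ in the $R$-dependent form $F(x,s)\le C(R)|s|+R^{-p^{\ast}}|s|^{p^{\ast}}$, so that $\int_{\Omega}F(x,Rw_{n})\,dx\le C(R)R\|w_{n}^{+}\|_{1}+\|w_{n}^{+}\|_{p^{\ast}}^{p^{\ast}}$; the second term is merely bounded (by Sobolev), not tending to zero, but that already suffices since $R^{p}$ dominates it once $\|w_{n}^{+}\|_{1}\to 0$ and $R\to\infty$.
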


Since we are only concerned with the nonnegative solution, the condition
$(L1)$ is natural. Moreover, condition $(L2)$ is just a consequence of the
p-superlinear at infinity of $f$. The type of condition $(L3)$ was first
introduced by Jeanjean \cite{J} and was used in subsequent works, see
\cite{ILU, LY, MS, S}. Finally, in earlier works (see e.g., \cite{ILU, LY,
MS}), they also often assumed that
\[
\underset{u\rightarrow0^{+}}{\lim}\frac{f\left(  x,u\right)  }{u^{p-1}%
}=0\text{ uniformly on }x\in\Omega
\]
which is stronger than our condition $(L4)$.

In case of $p=N$, we have $p^{\ast}=+\infty$. In this case, every polynomial
growth is admitted, but one knows by easy examples that $W_{0}^{1,N}\left(
\Omega\right)  \nsubseteq L^{\infty}\left(  \Omega\right)  $. Hence, one is
led to look for a function $g(s):%
\mathbb{R}
\rightarrow%
\mathbb{R}
^{+}$ with maximal growth such that
\[
\underset{u\in W_{0}^{1,N}\left(  \Omega\right)  ,~\left\Vert u\right\Vert
\leq1}{\sup}\int_{\Omega}g\left(  u\right)  dx<\infty
\]
It was shown by Trudinger \cite{T} and Moser \cite{M} that the maximal growth
is of exponential type. So, we must redefine the subcritical (exponential)
growth and the critical (exponential) growth in this case as follows:

$(SCE):\,\, $ $f$ has subcritical (exponential) growth on $\Omega
,i.e,\underset{u\rightarrow+\infty~}{\lim}\frac{\left\vert f\left(
x,u\right)  \right\vert }{\exp\left(  \alpha\left\vert u\right\vert
^{N/(N-1)}\right)  }=0,$ uniformly on $x\in\Omega\ $\ for all $\alpha
>0.\bigskip$

$(CG):\,\, $ $f$ has critical growth on $\Omega$, i.e., there exists
$\alpha_{0}>0$ such that
\[
\underset{u\rightarrow+\infty~}{\lim}\frac{\left\vert f\left(  x,u\right)
\right\vert }{\exp\left(  \alpha\left\vert u\right\vert ^{N/(N-1)}\right)
}=0,\text{ uniformly on }x\in\Omega,~\forall\alpha>\alpha_{0}%
\]
and
\[
\underset{u\rightarrow+\infty~}{\lim}\frac{\left\vert f\left(  x,u\right)
\right\vert }{\exp\left(  \alpha\left\vert u\right\vert ^{N/(N-1)}\right)
}=+\infty,\text{ uniformly on }x\in\Omega,~\forall\alpha<\alpha_{0}%
\]

When $p=N$ and $f$ has the subcritical exponential growth (SCE), again we can
use the Mountain Pass theorem together with the $(AR)$ condition to get the
nontrivial solution to (\ref{1.1}). Nevertheless, it seems that there are no
works when the nonlinear term $f$ does not satisfy the (AR) condition in this
case. Thus, the second main result of this paper is to establish the existence
of nontrivial nonnegative solutions to (\ref{1.2}) when $f$ has the
subcritical exponential growth (SCE). More precisely, we will study the
existence of the nonnegative nontrivial solution to problem (\ref{1.2}) where
we don't need to use the $(AR)$ condition.  Our result is as follows:

\begin{theorem}
Let $p=N$ and assume that $f$ has the subcritical exponential growth on
$\Omega$ (condition $(SCE)$) and satisfies $(L1),~(L2),~(L3)$ and $(L4)$.
Then, problem (\ref{1.2}) has a nontrivial solution.
\end{theorem}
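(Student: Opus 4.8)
The plan is to adapt the variational argument of Theorem 1 to the borderline case $p=N$, working in $W_0^{1,N}(\Omega)$ with the functional $J(u)=\frac1N\int_\Omega|\nabla u|^N\,dx-\int_\Omega F(x,u)\,dx$, whose nonnegative critical points solve (\ref{1.2}) once one checks (using $(L1)$ and a maximum-principle/testing-with-$u^-$ argument) that any critical point is nonnegative. First I would verify the mountain-pass geometry: condition $(L4)$ together with the Moser--Trudinger inequality (Lemma 3) and the subcritical growth $(SCE)$ gives, for $\|u\|$ small, an estimate $F(x,u)\le \frac{\lambda_1-\varepsilon}{N}|u|^N+C|u|^{N}\exp(\alpha|u|^{N/(N-1)})$ with $\alpha$ as small as we like; choosing $\|u\|=\rho$ small enough that $\alpha\rho^{N/(N-1)}$ stays below the Moser--Trudinger threshold, one controls the exponential term by $C\|u\|^{N+\delta}$ via Hölder and concludes $J(u)\ge\beta>0$ on $\|u\|=\rho$. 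The fact that $J(te)\to-\infty$ as $t\to\infty$ for a fixed $e\ge0$, $e\not\equiv0$, follows from $(L2)$ exactly as in the subcritical polynomial case. Hence $c=\inf_{\gamma}\max_{t}J(\gamma(t))\ge\beta>0$ is a well-defined mountain-pass level.

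Next I would invoke Cerami's version of the Mountain Pass Theorem to obtain a Cerami sequence $(u_n)$ at level $c$: $J(u_n)\to c$ and $(1+\|u_n\|)\|J'(u_n)\|\to0$. The key step, and the place where $(L3)$ does the work replacing $(AR)$, is to show such a sequence is bounded in $W_0^{1,N}(\Omega)$. This is the same mechanism as in Theorem 1: from $J(u_n)\to c$ and $\langle J'(u_n),u_n\rangle\to 0$ one gets $\int_\Omega H(x,u_n)\,dx=\int_\Omega\bigl(u_nf(x,u_n)-NF(x,u_n)\bigr)\,dx\to Nc$; if $\|u_n\|\to\infty$, set $v_n=u_n/\|u_n\|$, pass to a weak limit $v$, and split into the cases $v\equiv0$ and $v\not\equiv0$. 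When $v\not\equiv0$ one derives a contradiction from $(L2)$ (superlinearity forces $J(u_n)/\|u_n\|^N\to-\infty$ on the set where $v\ne0$), and when $v\equiv0$ one uses the Cerami condition together with the inequality $H(x,t)\le\theta H(x,s)+C_*$ for $0<t<s$, applied along the path $s=u_n$, $t=$ suitable rescaling, to bootstrap a uniform bound on $\int_\Omega F(x,u_n)$ and hence on $\|u_n\|$ — this is precisely Jeanjean-type reasoning and it transfers verbatim from the $1<p<N$ argument since it uses only the structure of $H$, not any embedding.

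Once $(u_n)$ is bounded, I would extract a subsequence with $u_n\rightharpoonup u$ in $W_0^{1,N}(\Omega)$, $u_n\to u$ in $L^q(\Omega)$ for every $q\ge1$, and $u_n\to u$ a.e. Here the subcritical exponential hypothesis $(SCE)$ is essential and is exactly what makes $p=N$ tractable without a concentration-compactness analysis: for a bounded sequence in $W_0^{1,N}$, $\|u_n\|\le M$, and any $\alpha>0$, one has $\int_\Omega\exp(\alpha|u_n|^{N/(N-1)})\,dx$ bounded by Moser--Trudinger after rescaling, so $(SCE)$ lets one dominate $|f(x,u_n)|$ by $C_\varepsilon+\varepsilon\exp(\alpha|u_n|^{N/(N-1)})$ and conclude, via Vitali's convergence theorem, that $f(\cdot,u_n)\to f(\cdot,u)$ in $L^1(\Omega)$ and $F(\cdot,u_n)\to F(\cdot,u)$ in $L^1(\Omega)$; similarly $\int_\Omega f(x,u_n)u_n\to\int_\Omega f(x,u)u$ and $\int_\Omega f(x,u_n)\varphi\to\int_\Omega f(x,u)\varphi$ for $\varphi\in W_0^{1,N}(\Omega)$. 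From $\langle J'(u_n),u_n-u\rangle\to0$ and the $(S_+)$ property of the $N$-Laplacian one upgrades to $u_n\to u$ strongly, so $J'(u)=0$ and $J(u)=c>0$, whence $u\not\equiv0$; the sign condition $(L1)$ combined with testing against $u^-$ forces $u\ge0$, giving the desired nontrivial solution of (\ref{1.2}). The main obstacle is the boundedness of the Cerami sequence — controlling $\int_\Omega H(x,u_n)\,dx$ via $(L3)$ when the weak limit of $u_n/\|u_n\|$ vanishes — and a secondary technical point is choosing $\rho$ in the geometry step small enough that the Moser--Trudinger threshold is respected; everything after boundedness is standard once $(SCE)$ supplies the Vitali-type compactness.
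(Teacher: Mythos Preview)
Your outline follows the paper's strategy, but there is a real gap in the boundedness step for the Cerami sequence. The $v\equiv 0$ branch does \emph{not} ``transfer verbatim'' using ``only the structure of $H$, not any embedding''; this is precisely where the subcritical exponential growth $(SCE)$ and the Moser--Trudinger inequality must enter, just as $(SCPI)$ and the Sobolev embedding enter in the $1<p<N$ proof of Lemma~5. The Jeanjean mechanism is a contradiction argument: with $t_n\in[0,1]$ maximizing $t\mapsto J(tu_n)$, one shows simultaneously that $J(t_nu_n)\to\infty$ and that
\[
NJ(t_nu_n)=\int_\Omega H(x,t_nu_n)\,dx\le \theta\int_\Omega H(x,u_n)\,dx+C_*|\Omega|=N\theta c+o(1).
\]
The upper bound indeed uses only $(L3)$ and the Cerami information, as you say. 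But the divergence $J(t_nu_n)\to\infty$ requires, for every fixed $R>0$, a lower bound of the form $NJ(Rv_n)\ge R^N - o_n(1) - C(\Omega)$, and for this the paper uses $(SCE)$ to write
\[
F(x,s)\le C_R|s|+\exp\Bigl(\tfrac{\alpha_N}{R^{N/(N-1)}}\,|s|^{N/(N-1)}\Bigr)
\]
and then applies Moser--Trudinger to $v_n$ (with $\|v_n\|=1$) to bound $\int_\Omega \exp(\alpha_N|v_n|^{N/(N-1)})\,dx\le C(\Omega)$ uniformly in $n$; the linear term vanishes since $v_n^+\rightharpoonup 0$. Without invoking the growth hypothesis and the embedding you cannot control $\int_\Omega F(x,Rv_n^+)\,dx$, and there is no route from $(L3)$ alone to a ``uniform bound on $\int_\Omega F(x,u_n)$''. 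So $(SCE)$ is essential twice: once here, and once in the post-boundedness compactness step you describe.

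A smaller imprecision in that compactness step: for a bounded sequence with $\sup_n\|u_n\|\le M$, the integral $\int_\Omega\exp(\alpha|u_n|^{N/(N-1)})\,dx$ is uniformly bounded by Moser--Trudinger only when $\alpha M^{N/(N-1)}\le\alpha_N$, not for ``any $\alpha>0$''. This suffices because $(SCE)$ lets you first choose $\alpha$ small enough relative to $M$ and then pick the constant $C_\alpha$, but the order of the quantifiers matters.
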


When $p=N$ and $\ f$ has the critical exponential growth (CG), the study of
the problem (\ref{1.1}) becomes much more difficult than in the case of
subcritical exponential growth. Similar to the case of the critical polynomial
growth in $R^{N}$ ($N\geq3$) for the Laplacian studied by Brezis and Nirenberg
in their pioneering work \cite{BN}), our Euler-Lagrange functional does not
satisfy the Palais-Smale condition at all level anymore. Instead, the authors
in \cite{A, O, OEU} used the extremal function sequences related to
Moser-Trudinger inequality to prove that $J$ satisfies the Palais-Smale at a
certain level. Moreover, this Palais-Smale sequence was shown to be bounded
and then derived a nontrivial solution. The idea of choosing the testing
functions which are extremal to the Moser-Trudinger inequality is inspired by
the work of Brezis and Nirenberg where the testing functions are extremal to
the Sobolev embedding inequality.

However, in the works \cite{A, O, OEU}, they need to assume a much more
restrictive condition$~$%
\[
(ARR):\,\, ~\exists t_{0}>0,~\exists M>0\text{ such that }\forall\left\vert
u\right\vert \geq t_{0},~\forall x\in\Omega,0<F(x,u)\leq M\left\vert f\left(
x,u\right)  \right\vert
\]
It's clear that the condition (ARR) implies the $(AR)$ condition.

Our third main purpose of this paper is to study problem (\ref{1.2}) without
using the $(ARR)$ condition or $(AR)$ condition. Indeed, we get the following result:

\begin{theorem}
Let $p=N$ and assume (L1), (L2), (L3) with $\theta=1$ and $C^{\ast}=0$, (L4)
and that $f$ has critical growth on $\Omega$ $(CG)$, say, at $\alpha_{0}$.
Furthermore assume that

(L5):\,\, $\underset{t\rightarrow+\infty}{\lim}f\left(  x,t\right)
\exp\left(  -\alpha_{0}\left\vert t\right\vert ^{N/(N-1)}\right)  t\geq
\beta>\left(  \frac{N}{d}\right)  ^{N}\frac{1}{\mathcal{M}\alpha_{0}^{N-1}}$,
uniformly in $(x,t)$ where $d$ is the inner radius of $\Omega$, i.e. $d:=$
radius of the largest open ball $\subset\Omega;$
\[
\mathcal{M}=\underset{n\rightarrow\infty}{\lim}n%
{\displaystyle\int\limits_{0}^{1}}
\exp n\left(  t^{N/(N-1)}-t\right)  dt~(\geq2)
\]
and

(L6):\,\, $f$ is in the class $(L_{0}),$ i.e., for any $\left\{
u_{n}\right\}  $ in $W_{0}^{1,N}\left(  \Omega\right)  ,$ if $\left\{
\begin{array}
[c]{c}%
u_{n}\rightharpoonup0\text{ in }W_{0}^{1,N}\left(  \Omega\right)  \text{ }\\
f(x,u_{n})\rightarrow0\text{ in }L^{1}\left(  \Omega\right)
\end{array}
\right.  $, then $F(x,u_{n})\rightarrow0$ in $L^{1}\left(  \Omega\right)  $
(up to a subsequence).

Then, problem (\ref{1.2}) has a nontrivial solution.
\end{theorem}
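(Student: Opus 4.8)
The plan is to use Cerami's version of the Mountain Pass Theorem applied to the Euler--Lagrange functional
\[
J(u)=\frac{1}{N}\int_\Omega|\nabla u|^N\,dx-\int_\Omega F(x,u)\,dx,\qquad u\in W_0^{1,N}(\Omega),
\]
exactly as in the proofs of Theorems 1 and 2, but now one must control the Palais--Smale/Cerami sequences in the presence of critical exponential growth, where global compactness fails. First I would verify the mountain pass geometry: condition $(L4)$ gives, for $u$ small in norm, that $\int_\Omega F(x,u)\,dx\le\frac{\mu}{N}\int_\Omega|u|^N$ for some $\mu<\lambda_1(\Omega)$ (after absorbing the higher-order critical-growth term via the Moser--Trudinger inequality on a small ball in $W_0^{1,N}$), so $J(u)\ge\rho>0$ on a small sphere; condition $(L2)$ ($N$-superlinearity of $F$) gives a fixed direction $v$ with $J(tv)\to-\infty$, producing a point $e$ with $J(e)<0$. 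Define the minimax level $c=\inf_{\gamma}\max_{t\in[0,1]}J(\gamma(t))>0$ over paths joining $0$ to $e$.

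Next I would extract a Cerami sequence $(u_n)$ at level $c$: $J(u_n)\to c$ and $(1+\|u_n\|)\|J'(u_n)\|\to0$. Boundedness of $(u_n)$ in $W_0^{1,N}(\Omega)$ is where condition $(L3)$ enters (here with $\theta=1$, $C_*=0$): from $NJ(u_n)-\langle J'(u_n),u_n\rangle=\int_\Omega H(x,u_n)\,dx$ and the Cerami condition, $\int_\Omega H(x,u_n)\,dx$ stays bounded; combining this with $(L2)$ and the monotonicity-type inequality $H(x,t)\le H(x,s)$ for $t<s$ forces $\|u_n\|$ bounded, by the now-standard argument (split $\Omega$ according to whether $|u_n|$ is large, use $(L2)$ on the large set to get a contradiction if $\|u_n\|\to\infty$ after normalizing $w_n=u_n/\|u_n\|$). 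So $u_n\rightharpoonup u$ weakly, and up to a subsequence $u_n\to u$ a.e. Standard arguments (as in \cite{A, O, OEU}) then show $f(x,u_n)\to f(x,u)$ in $L^1_{loc}$, $u$ is a weak solution of \eqref{1.2}, and $u\ge0$ by $(L1)$ (test against $u^-$).

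The crux is showing $u\not\equiv0$, and this is precisely where the level estimate from $(L5)$ is used. The strategy, following Brezis--Nirenberg and Adimurthi--do\'O, is to show $c$ lies strictly below the first noncompactness threshold, namely $c<\frac{1}{N}\left(\frac{\alpha_N}{\alpha_0}\right)^{N-1}$ where $\alpha_N=N\omega_{N-1}^{1/(N-1)}$ is the sharp Moser--Trudinger constant. One does this by inserting the Moser sequence of truncated-logarithm functions supported in the largest ball $B_d\subset\Omega$ (of radius $d$) into the minimax and estimating $\max_t J(t\,M_n)$; the constant $\mathcal M$ and the quantity $(N/d)^N\frac{1}{\mathcal M\alpha_0^{N-1}}$ in $(L5)$ are calibrated exactly so that the growth lower bound on $f$ beats the loss coming from the concentration of the Moser functions, yielding $c<\frac1N(\alpha_N/\alpha_0)^{N-1}$. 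Then one runs the concentration-compactness dichotomy on $(u_n)$: if $u\equiv0$, then $\|u_n\|^N\to Nc<(\alpha_N/\alpha_0)^{N-1}$, so for large $n$ one has $\alpha_0\|u_n\|^{N/(N-1)}<\alpha_N$ with room to spare, hence by the Moser--Trudinger inequality $\exp(\alpha_0|u_n|^{N/(N-1)})$ is bounded in some $L^q$, $q>1$; this gives equiintegrability of $f(x,u_n)u_n$ and $F(x,u_n)$, whence $f(x,u_n)\to0$ in $L^1$ and, by $(L6)$ (the class $(L_0)$ hypothesis), $F(x,u_n)\to0$ in $L^1$ — but then $J(u_n)=\frac1N\|u_n\|^N-o(1)$ and $\langle J'(u_n),u_n\rangle=\|u_n\|^N-o(1)\to0$ force $\|u_n\|\to0$, so $c=0$, contradicting $c>0$. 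Therefore $u\not\equiv0$, and $u$ is the desired nontrivial nonnegative solution.

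I expect the main obstacle to be the sharp minimax level estimate $c<\frac1N(\alpha_N/\alpha_0)^{N-1}$: one must track all constants in the Moser function computation carefully (the $L^N$-norm of the gradient, the $L^N$-norm of the function itself, and the contribution of $F$ on the concentration ball) and see that $(L5)$, via the precise value of $\mathcal M$ and the inner radius $d$, supplies exactly enough to close the gap; a secondary technical point is justifying the passage $f(x,u_n)\to f(x,u)$ in $L^1$ and the identification of the weak limit as a solution in the critical-growth regime, which uses a generalized Lebesgue dominated convergence / Vitali-type argument together with the boundedness of $\int_\Omega u_nf(x,u_n)$ coming from the Cerami condition.
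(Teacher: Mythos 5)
Your overall strategy (Cerami sequence at the mountain pass level, Moser functions on the largest ball $B_d\subset\Omega$ to push the level below $\frac1N(\alpha_N/\alpha_0)^{N-1}$, and $(L6)$ to rule out $u\equiv0$) matches the paper, but there is a genuine gap in where you locate the role of the level estimate. You claim that boundedness of the Cerami sequence follows from $(L2)$, $(L3)$ and the Cerami condition alone, "by the now-standard argument," and that $(L5)$ enters only at the nontriviality step. In the critical exponential regime this is false: in Jeanjean's $t_n$-trick one estimates $J(Rv_n)$ with $v_n=u_n/\|u_n\|$, and the bound $F(x,s)\le C|s|+C'\exp\bigl((\alpha_0+\varepsilon)s^{N/(N-1)}\bigr)$ is only exploitable through Moser--Trudinger when $(\alpha_0+\varepsilon)R^{N/(N-1)}\le\alpha_N$, i.e.\ for $R<(\alpha_N/\alpha_0)^{(N-1)/N}$. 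Hence one does not get $J(t_nu_n)\to\infty$ (as in the subcritical Lemmas 5 and 8), but only $\liminf_n J(t_nu_n)\ge\frac1N(\alpha_N/\alpha_0)^{N-1}$. The contradiction with the $(L3)$-based bound $NJ(t_nu_n)\le\int_\Omega\bigl[f(x,u_n)u_n-NF(x,u_n)\bigr]dx+o(1)=NC_M+o(1)$ therefore requires \emph{both} the strict minimax estimate $C_M<\frac1N(\alpha_N/\alpha_0)^{N-1}$ (this is exactly why the Moser functions and $(L5)$ are needed already at this stage) \emph{and} the exact form of $(L3)$ with $\theta=1$, $C_{\ast}=0$: any factor $\theta>1$ or additive constant would destroy the comparison with the finite threshold, whereas in the subcritical case an $O(1)$ error was harmless. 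Your sketch never explains why $\theta=1$, $C_{\ast}=0$ are assumed, which is a symptom of this missing point; as written, the boundedness step would fail.

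A secondary issue is a circularity in your nontriviality argument. You start from "if $u\equiv0$ then $\|u_n\|^N\to Nc$," but this identity requires $\int_\Omega F(x,u_n)\,dx\to0$, which in your chain is only obtained afterwards (via equiintegrability, then $f(x,u_n)\to0$ in $L^1$, then $(L6)$). The correct order, as in the paper and in do \'O's work, is: for the bounded Cerami sequence with $u=0$ one first shows $f(x,u_n)\to0$ in $L^1(\Omega)$ (this uses the $L^1$-convergence lemma for bounded Palais--Smale sequences, relying on the boundedness of $\int_\Omega f(x,u_n)u_n\,dx$); then $(L6)$ yields $F(x,u_n)\to0$ in $L^1$; only then does $J(u_n)\to C_M$ give $\|u_n\|^N\to NC_M<(\alpha_N/\alpha_0)^{N-1}$, after which Moser--Trudinger and H\"older give $\int_\Omega f(x,u_n)u_n\,dx\to0$, hence $\|u_n\|\to0$ and $C_M=0$, a contradiction. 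The ingredients you list are the right ones, but the logical order must be repaired for the argument to close.
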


It is easy to see that condition $(L2)$ in Theorem 3 is just a consequence of the critical exponential growth condition $(CG)$ and therefore it is automatically satisfied.

The following remarks are in order. First of all, in dimension two we have recently established in \cite{LaLu}
the existence of nontrivial nonnegative solutions to the Laplacian equation (i.e., $p=2$) when the nonlinear
term $f$ has the subcritical or critical exponential growth of order $\exp(\alpha u^2)$ but without satisfying the
Ambrosetti-Rabinowitz condition. These results in dimension two in \cite{LaLu} extend those of \cite{FMR} to the case when $f$ does not have the $(AR)$ condition. Second,
there have been many works in the literature in which the
$(AR)$ condition was replaced by other alternative conditions when $f$ has the polynomial growth. Our results in this paper appear to be the first time in high dimension for $N-$Laplacian when $f$ has the subcritical or critical exponential growth and without $(AR)$ condition.  

As far as the case when the nonlinear term $f$ has the polynomial growth is concerned,   we recall that, in \cite{WZo},
Willem and Zou used
\begin{align*}
H(x,s)\text{ is increasing in }s\text{, }\forall x  &  \in\Omega;\text{
}sf(x,s)\geq0~\forall s\in%
\mathbb{R}
,\\
~sf(x,s)  &  \geq C_{0}\left\vert s\right\vert ^{\mu},~\forall\left\vert
s\right\vert \geq s_{0}>0,~\forall x\in\Omega
\end{align*}
where $\mu>2$ and $C_{0}>0$, instead of $(AR)$. It's clear that this condition
is much stronger than our conditions. Also, in \cite{CM}, the authors replaced
$(AR)$ condition by
\[
\underset{s\rightarrow\infty}{\lim\inf}\frac{H\left(  x,s\right)  }{\left\vert
s\right\vert ^{\mu}}\geq k>0,\text{ uniformly a.e. }x\in\Omega,
\]
where $\mu\geq\mu_{0}>0$. In \cite{ScZ}, Schechter and Zou assumed that
\[
H(x,s)\text{ is convex in }s,~\forall x\in\Omega
\]
or there are constants $C>0$, $\mu>2$ and $r\geq0,$ such that
\[
\mu F\left(  x,t\right)  -tf\left(  x,t\right)  \leq C\left(  1+t^{2}\right)
,~\left\vert t\right\vert \geq r.
\]
As remarked in \cite{MS}, the later condition is in fact equivalent to $(AR)$
and it's easy to see that the convexity on $H$ is much stronger than our
condition. Indeed, observe that function $H(x,s)$ is a "quasi-monotonic"
function, and also if $H$ is monotonic function in $s<0$ and $s>0$, or a
convex function in $%
\mathbb{R}
$, then it satisfies $(L3)$ with $\theta=1$.

The organization of the paper is as follows. In section 2, we collect some
known results of Mountain Pass Theorem in critical point theory (\cite{AR},
\cite{R}, \cite{Ce1}, \cite{Ce2}). In particular, it is necessary to adapt the
appropriate version of the Mountain Pass Theorem  due to Cerami \cite{Ce1, Ce2} to remove the
Ambrosetti-Rabinowitz condition. Section 3 provides the proof of Theorem 1,
i.e., the existence of nontrivial nonnegative solutions to Problem (\ref{1.2})
when the nonlinear term $f$ has the improved subcritical polynomial growth
(SCPI). Section 4 deals with the case when the nonlinear term $f$ has the
subcritical exponential growth and gives the proof of Theorem 2. Section 5
contains the proof of Theorem 3 and establishes the existence of nontrivial
solutions when $f$ has the critical exponential growth.

\section{Preliminaries and Mountain Pass Theorems}

Let $\Omega$ be a bounded domain in $%
\mathbb{R}
^{N}$. We denote
\begin{align*}
\left\Vert u\right\Vert  &  =\left(  \int_{\Omega}\left\vert \nabla
u\right\vert ^{p}dx\right)  ^{1/p}\\
\left\Vert u\right\Vert _{p}  &  =\left(  \int_{\Omega}\left\vert u\right\vert
^{p}dx\right)  ^{1/p}\\
\lambda_{1}\left(  \Omega\right)   &  =\inf\left\{  \frac{\left\Vert
u\right\Vert ^{p}}{\left\Vert u\right\Vert _{p}^{p}}:~u\in W_{0}^{1,p}\left(
\Omega\right)  \setminus\left\{  0\right\}  \right\} \\
d  &  =\text{radius of the largest open ball }\subset\Omega
\end{align*}
Define the Euler-Lagrange functional associated to problem (\ref{1.2}):
\[
J(u)=\frac{1}{p}\left\Vert u\right\Vert ^{p}-\int_{\Omega}F(x,u)dx,\text{
}u\in W_{0}^{1,p}\left(  \Omega\right)
\]
From the hypotheses on $f$, by the standard arguments and the Moser-Trudinger
inequality (see Lemma 3), we can easily see that $J$ is well-defined. Also,
it's standard to check that $J$ is $C^{1}\left(  W_{0}^{1,p}\left(
\Omega\right)  ,%
\mathbb{R}
\right)  $ and
\[
DJ(u)v=\int_{\Omega}\left\vert \nabla u\right\vert ^{p-2}\nabla u\nabla
vdx-\int_{\Omega}f(x,u)vdx,~v\in W_{0}^{1,p}\left(  \Omega\right)
\]
Thus, the critical point of $J$ are precisely the weak solutions of problem
(\ref{1.2}). We will prove the existence of such critical points by the
Mountain Pass Theorem.

\begin{definition}
Let $\left(  X,\left\Vert \cdot\right\Vert _{X}\right)  $ be a real Banach
space with its dual space $\left(  X^{\ast},\left\Vert \cdot\right\Vert
_{X^{\ast}}\right)  $ and $I\in C^{1}\left(  X,%
\mathbb{R}
\right)  $. For $c\in%
\mathbb{R}
$, we say that $I$ satisfies the $(PS)_{c}$ condition if for any sequence
$\left\{  x_{n}\right\}  \subset X$ with
\[
I\left(  x_{n}\right)  \rightarrow c,~DI\left(  x_{n}\right)  \rightarrow
0\text{ in }X^{\ast}%
\]
there is a subsequence $\left\{  x_{n_{k}}\right\}  $ such that $\left\{
x_{n_{k}}\right\}  $ converges strongly in $X$. Also, we say that $I$
satisfies the $(C)_{c}$ condition if for any sequence $\left\{  x_{n}\right\}
\subset X$ with
\[
I\left(  x_{n}\right)  \rightarrow c,~\left\Vert DI\left(  x_{n}\right)
\right\Vert _{X^{\ast}}\left(  1+\left\Vert x_{n}\right\Vert _{X}\right)
\rightarrow0
\]
there is a subsequence $\left\{  x_{n_{k}}\right\}  $ such that $\left\{
x_{n_{k}}\right\}  $ converges strongly in $X$.
\end{definition}

We have the following versions of the Mountain Pass Theorem (see \cite{AR,
Ce1, Ce2, LY}):

\begin{lemma}
Let $\left(  X,\left\Vert \cdot\right\Vert _{X}\right)  $ be a real Banach
space and $I\in C^{1}\left(  X,%
\mathbb{R}
\right)  $ satisfies the $(C)_{c}$ condition for any $c\in%
\mathbb{R}
$, $I(0)=0$ and

$(i)$ There are constants $\rho,~\alpha>0$ such that $I|_{\partial B_{\rho}%
}\geq\alpha$.

$(ii)$ There is an $e\in X\setminus B_{\rho}$ such that $I(e)\leq0.$

Then $c=\underset{\gamma\in\Gamma}{\inf}\underset{0\leq t\leq1}{\max}%
I(\gamma\left(  t\right)  )\geq\alpha$ is a critical value of $I$ where
\[
\Gamma=\left\{  \gamma\in C^{0}\left(  \left[  0,1\right]  ,X\right)
,~\gamma(0)=0,~\gamma\left(  1\right)  =e\right\}  .
\]

\end{lemma}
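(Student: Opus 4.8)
The plan is to run the classical minimax argument, with the single modification that the Cerami $(C)_c$ condition forces: the deformation flow must be built from a pseudo-gradient field rescaled by the factor $1+\left\Vert x\right\Vert _X$. First I would check that $c\ge\alpha$ and that $c$ is finite. The straight segment $t\mapsto te$ belongs to $\Gamma$, so $\Gamma\neq\emptyset$ and $c<\infty$; and any $\gamma\in\Gamma$ joins $0\in B_\rho$ to $e\in X\setminus B_\rho$, hence by connectedness $\gamma(t_\gamma)\in\partial B_\rho$ for some $t_\gamma$, so $\max_t I(\gamma(t))\ge I(\gamma(t_\gamma))\ge\alpha$ by $(i)$; taking the infimum gives $c\ge\alpha>0$. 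Fix once and for all some $\bar\varepsilon\in(0,\alpha/2)$, so that $c-2\bar\varepsilon>0$ and therefore $I(0)=0<c-\bar\varepsilon$ and $I(e)\le0<c-\bar\varepsilon$.

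Next I argue by contradiction: assume $c$ is not a critical value, i.e.\ $K_c:=\{x\in X:I(x)=c,\ DI(x)=0\}=\emptyset$. I claim that together with $(C)_c$ this yields a quantitative estimate: there exist $\varepsilon\in(0,\bar\varepsilon)$ and $\delta>0$ with
\[
\left\Vert DI(x)\right\Vert _{X^{\ast}}\bigl(1+\left\Vert x\right\Vert _X\bigr)\ge\delta\qquad\text{whenever }\left\vert I(x)-c\right\vert \le 2\varepsilon .
\]
Indeed, if no such $\varepsilon,\delta$ existed we could pick $x_n$ with $I(x_n)\to c$ and $\left\Vert DI(x_n)\right\Vert _{X^{\ast}}(1+\left\Vert x_n\right\Vert _X)\to0$; by $(C)_c$ a subsequence converges strongly to some $x$, which by continuity satisfies $I(x)=c$ and $DI(x)=0$, i.e.\ $x\in K_c$, a contradiction.

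Now I construct the deformation. On the strip where $\left\vert I(x)-c\right\vert \le 2\varepsilon$ let $w$ be a locally Lipschitz pseudo-gradient field for $I$ (with $\left\Vert w(x)\right\Vert _X\le2\left\Vert DI(x)\right\Vert _{X^{\ast}}$ and $DI(x)w(x)\ge\left\Vert DI(x)\right\Vert _{X^{\ast}}^2$), let $\chi$ be a locally Lipschitz cutoff equal to $1$ on $\{\left\vert I-c\right\vert \le\varepsilon\}$ and to $0$ outside $\{\left\vert I-c\right\vert \le 2\varepsilon\}$, and set
\[
V(x)=-\,\chi(x)\,\frac{1+\left\Vert x\right\Vert _X}{\left\Vert w(x)\right\Vert _X}\,w(x).
\]
This $V$ is locally Lipschitz, globally defined, and $\left\Vert V(x)\right\Vert _X\le 1+\left\Vert x\right\Vert _X$. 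Let $\eta(t,x)$ solve $\dot\eta=V(\eta)$, $\eta(0,x)=x$. From $\frac{d}{dt}\left\Vert \eta\right\Vert _X\le 1+\left\Vert \eta\right\Vert _X$ and Gronwall we get $\left\Vert \eta(t,x)\right\Vert _X\le e^{t}\bigl(1+\left\Vert x\right\Vert _X\bigr)$, so the flow never blows up in finite time and $\eta(t,\cdot)$ is a homeomorphism of $X$ for each $t$. Along the flow $I$ is nonincreasing, and where $\chi=1$ and $\left\vert I(\eta)-c\right\vert \le2\varepsilon$ the pseudo-gradient inequalities plus the quantitative estimate give $\frac{d}{dt}I(\eta(t,x))\le-\tfrac12\left\Vert DI(\eta)\right\Vert _{X^{\ast}}\bigl(1+\left\Vert \eta\right\Vert _X\bigr)\le-\tfrac{\delta}{2}$. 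A routine bookkeeping argument (choosing $T\ge 4\varepsilon/\delta$) then shows $\eta(T,\cdot)$ maps $I^{c+\varepsilon}:=\{I\le c+\varepsilon\}$ into $I^{c-\varepsilon}$, while $\eta(T,x)=x$ whenever $I(x)\le c-\varepsilon$ (there $\chi\equiv0$ near the trajectory). To finish, pick $\gamma\in\Gamma$ with $\max_t I(\gamma(t))<c+\varepsilon$, which exists by definition of $c$; since $I(0),I(e)\le0<c-\varepsilon$ the flow fixes the endpoints, so $\tilde\gamma:=\eta(T,\gamma(\cdot))\in\Gamma$, and $\gamma([0,1])\subset I^{c+\varepsilon}$ forces $\tilde\gamma([0,1])\subset I^{c-\varepsilon}$, i.e.\ $\max_t I(\tilde\gamma(t))\le c-\varepsilon<c$, contradicting the definition of $c$. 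Hence $K_c\neq\emptyset$, and $c\ge\alpha$ is a critical value.

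The step I expect to be the main obstacle is the construction of $\eta$ under the weaker $(C)_c$ hypothesis: the rescaling by $1+\left\Vert x\right\Vert _X$ must be chosen precisely so that the Cerami estimate converts into the uniform decay rate $\tfrac{d}{dt}I(\eta)\le-\delta/2$, while the matching bound $\left\Vert V(x)\right\Vert _X\le1+\left\Vert x\right\Vert _X$ is exactly what keeps the flow complete via Gronwall, so that $\eta(t,\cdot)$ remains a genuine homeomorphism. An alternative that bypasses building the flow is to apply Ekeland's variational principle to $\varphi(\gamma):=\max_t I(\gamma(t))$ on the complete metric space $(\Gamma,d)$ with $d(\gamma_1,\gamma_2)=\max_t\left\Vert \gamma_1(t)-\gamma_2(t)\right\Vert _X$, producing directly a Cerami sequence at level $c$ on which $(C)_c$ can be invoked; the delicate point there is controlling the set of parameters $t$ at which $I\circ\gamma$ attains its maximum.
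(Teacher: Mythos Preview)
The paper does not prove this lemma; it is stated as a known result with the attribution ``see \cite{AR, Ce1, Ce2, LY}'' and used as a black box. So there is no proof in the paper to compare against.

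Your argument is the standard one going back to Cerami's original papers: rescale the pseudo-gradient field by the factor $1+\left\Vert x\right\Vert _X$ so that the Cerami lower bound $\left\Vert DI(x)\right\Vert _{X^{\ast}}(1+\left\Vert x\right\Vert _X)\ge\delta$ translates into a uniform decay rate for $I$ along the flow, while the matching linear growth bound $\left\Vert V(x)\right\Vert _X\le 1+\left\Vert x\right\Vert _X$ keeps the flow globally defined via Gronwall. This is correct and is essentially how the cited references proceed. One small slip: you write ``$\eta(T,x)=x$ whenever $I(x)\le c-\varepsilon$,'' but your cutoff $\chi$ vanishes only outside $\{|I-c|\le 2\varepsilon\}$, so the trajectory is guaranteed constant only for $I(x)\le c-2\varepsilon$. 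This does not affect the argument, since the endpoints $0$ and $e$ satisfy $I(0)=0$ and $I(e)\le 0$, and you arranged $c-2\varepsilon>c-2\bar\varepsilon>c-\alpha\ge 0$, so both endpoints lie in the region where $\chi\equiv 0$ and are fixed by the flow.
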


\begin{lemma}
Let $\left(  X,\left\Vert \cdot\right\Vert _{X}\right)  $ be a real Banach
space and $I\in C^{1}\left(  X,%
\mathbb{R}
\right)  $ satisfies $I(0)=0$ and

$(i)$ There are constants $\rho,~\alpha>0$ such that $I|_{\partial B_{\rho}%
}\geq\alpha$.

$(ii)$ There is an $e\in X\setminus B_{\rho}$ such that $I(e)\leq0.$

Let $C_{M}$ be characterized by
\[
C_{M}=\underset{\gamma\in\Gamma}{\inf}\underset{0\leq t\leq1}{\max}%
I(\gamma\left(  t\right)  )
\]
where
\[
\Gamma=\left\{  \gamma\in C^{0}\left(  \left[  0,1\right]  ,X\right)
,~\gamma(0)=0,~\gamma\left(  1\right)  =e\right\}  .
\]
Then $I$ possesses a $(C)_{C_{M}}$ sequence.
\end{lemma}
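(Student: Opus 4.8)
The plan is to prove this statement — which is the Cerami form of the Mountain Pass Theorem, requiring only the geometric hypotheses (i), (ii) and no compactness — by a scaled deformation argument, and then Lemma 1 follows from it by feeding the resulting sequence into the $(C)_c$ condition. First I would observe that $C_M\ge\alpha>0$: every $\gamma\in\Gamma$ has $\gamma(0)=0\in B_\rho$ and $\gamma(1)=e\notin\overline{B_\rho}$, so by continuity $\gamma([0,1])$ meets $\partial B_\rho$, whence $\max_{[0,1]}I\circ\gamma\ge\inf_{\partial B_\rho}I\ge\alpha$; taking the infimum over $\Gamma$ gives $C_M\ge\alpha$.

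Next I argue by contradiction. If $I$ admits no $(C)_{C_M}$ sequence, then there are $\varepsilon\in(0,1)$ and $\delta>0$, which after shrinking $\delta$ we may assume satisfy $3\delta<C_M$, such that
\[
\|DI(u)\|_{X^\ast}\,\bigl(1+\|u\|_X\bigr)\ \ge\ \varepsilon\qquad\text{for all }u\text{ with }|I(u)-C_M|\le 3\delta .
\]
On a neighborhood $\mathcal N$ of the strip $\{|I-C_M|\le 3\delta\}$ the functional $I$ has a locally Lipschitz pseudo-gradient field $V$ with $\|V(u)\|_X\le 2\|DI(u)\|_{X^\ast}$ and $\langle DI(u),V(u)\rangle\ge\|DI(u)\|_{X^\ast}^2$; this $V$ is well defined because $DI$ does not vanish on that strip. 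Choosing a Lipschitz cutoff $g:\mathbb R\to[0,1]$ with $g\equiv 1$ on $[C_M-\delta,C_M+\delta]$ and $\operatorname{supp}g\subset(C_M-2\delta,C_M+2\delta)$, I set $\chi(u)=g(I(u))$ and define
\[
W(u)\ =\ -\,\chi(u)\,\bigl(1+\|u\|_X\bigr)\,\frac{V(u)}{\|V(u)\|_X}\ \text{ on }\mathcal N,\qquad W(u)\ =\ 0\ \text{ off }\mathcal N .
\]
Then $W$ is locally Lipschitz on $X$ (the two definitions agree, both equal to $0$, on $\{2\delta<|I-C_M|<3\delta\}$) and $\|W(u)\|_X\le 1+\|u\|_X$, so by Gr\"onwall the Cauchy problem $\partial_t\eta=W(\eta)$, $\eta(0,\cdot)=\operatorname{id}$, has a flow $\eta:[0,\infty)\times X\to X$ defined for all time. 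Along trajectories, $\tfrac{d}{dt}I(\eta)=\langle DI(\eta),W(\eta)\rangle\le -\chi(\eta)\bigl(1+\|\eta\|\bigr)\tfrac12\|DI(\eta)\|$, so $I$ is nonincreasing everywhere and decreases at rate $\ge\varepsilon/2$ wherever $\chi(\eta)=1$.

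The decisive step is the minimax comparison. Pick $\gamma\in\Gamma$ with $\max_{[0,1]}I\circ\gamma\le C_M+\delta$, fix $T>4\delta/\varepsilon$, and put $\tilde\gamma(t)=\eta(T,\gamma(t))$. Because $I(0)=0$ and $I(e)\le 0$ both lie below $C_M-2\delta$, the cutoff $\chi$ vanishes at $\gamma(0)$ and $\gamma(1)$, so the flow fixes those points and $\tilde\gamma\in\Gamma$. If $I(\gamma(t))\le C_M-\delta$ then monotonicity gives $I(\tilde\gamma(t))\le C_M-\delta$; if instead $I(\gamma(t))>C_M-\delta$, then as long as the trajectory stays in $\{I\ge C_M-\delta\}$ it also stays in $\{I\le C_M+\delta\}$, where $\chi\equiv1$, so $I$ decreases at rate $\ge\varepsilon/2$ and over time $T$ drops by more than $2\delta$, pushing it below $C_M-\delta$, after which it stays there. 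In all cases $\max_{[0,1]}I\circ\tilde\gamma\le C_M-\delta<C_M$, contradicting the definition of $C_M$. Hence a $(C)_{C_M}$ sequence must exist.

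The one delicate point — and the reason this produces a Cerami sequence rather than merely a Palais–Smale one — is the weight $1+\|u\|_X$ built into $W$: precisely this factor (i) makes the decay rate of $I$ comparable to the Cerami quantity $\|DI(u)\|(1+\|u\|)$, and (ii) keeps $W$ of at most linear growth, which is what guarantees the negative gradient flow exists globally; with the unweighted pseudo-gradient the flow can break down in finite time and one only recovers a $(PS)_{C_M}$ sequence. An alternative route is to apply Ekeland's variational principle to $\gamma\mapsto\max_{[0,1]}I\circ\gamma$ on the complete metric space $\Gamma$, but extracting the weighted conclusion then forces a $\|u\|$-weighted metric on $\Gamma$, which is less transparent than the deformation argument above.
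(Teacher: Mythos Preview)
Your argument is correct and is the standard weighted-deformation proof of the Cerami mountain-pass lemma. The key point you isolate --- that the factor $1+\|u\|_X$ in $W$ simultaneously makes the decay of $I$ along the flow comparable to the Cerami quantity $\|DI(u)\|(1+\|u\|)$ and keeps $W$ of at most linear growth so the flow is global --- is exactly what distinguishes this from the ordinary Palais--Smale version. One minor imprecision: you take $\mathcal N$ to be a neighborhood of the \emph{closed} strip $\{|I-C_M|\le 3\delta\}$, but the contradiction hypothesis only gives $DI\neq 0$ on that closed strip, not on a larger set; the clean fix is to let $\mathcal N=\{|I-C_M|<3\delta\}$ (open), which together with the open set $\{|I-C_M|>2\delta\}$ covers $X$, and on the overlap both formulas for $W$ vanish. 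This does not affect the argument.

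As for comparison with the paper: there is nothing to compare. The paper does not prove Lemma~2 at all; it is quoted as a known result with references to \cite{AR, Ce1, Ce2, LY}. You are supplying a self-contained proof where the authors simply cite one, and your deformation argument is essentially the one that appears in those sources.
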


As we remarked earlier, our results are motivated by the so-called
Moser-Trudinger inequality which can be found in \cite{M}. As we know, if
$\Omega\subset%
\mathbb{R}
^{N}$ $(N>p)$ is a bounded domain, then the Sobolev imbedding theorem states
that $W_{0}^{1,p}\left(  \Omega\right)  \subset L^{q}\left(  \Omega\right)  $,
for $1\leq q\leq p^{\ast}=\frac{pN}{N-p}$, or equivalently,
\[
\underset{u\in W_{0}^{1,p}\left(  \Omega\right)  ,~\left\Vert u\right\Vert
\leq1}{\sup}\int_{U}\left\vert u\right\vert ^{q}dx\leq C\left(  \Omega\right)
,~\text{for }1\leq q\leq p^{\ast},
\]
while the supremum is infinite for $q>p^{\ast}$. In the case $p=N$, it was
shown by Trudinger \cite{T} and Moser \cite{M} that the maximal growth is of
exponential type. More precisely, we have the following lemma:

\begin{lemma}
Let $u\in W_{0}^{1,N}\left(  \Omega\right)  $, then $\exp(\left\vert
u\right\vert ^{N/(N-1)})\in L^{q}\left(  \Omega\right)  $ for all $1\leq
q<\infty$. Moreover,
\[
\underset{u\in W_{0}^{1,N}\left(  \Omega\right)  ,~\left\Vert u\right\Vert
\leq1}{\sup}\int_{\Omega}\exp(\alpha\left\vert u\right\vert ^{N/(N-1)})dx\leq
C\left(  \Omega\right)  \text{ for }\alpha\leq\alpha_{N}.
\]
The inequality is optimal: for any growth $\exp(\alpha\left\vert u\right\vert
^{N/(N-1)})$ with $\alpha>\alpha_{N}$ the corresponding supremum is $+\infty$.
\end{lemma}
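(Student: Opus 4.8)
The plan is to follow Moser's original symmetrization argument. First I would reduce to the model case $\Omega=B_{1}$, the unit ball, with $u$ radially symmetric and nonincreasing. Given $u\in W_{0}^{1,N}(\Omega)$ with $\|u\|\le1$, extend $u$ by zero to $\mathbb{R}^{N}$ and let $u^{\ast}$ be its Schwarz symmetrization, which is supported on a ball $B_{R}$ with $|B_{R}|=|\Omega|$. By the P\'olya--Szeg\H{o} inequality, $\int_{B_{R}}|\nabla u^{\ast}|^{N}\,dx\le\int_{\Omega}|\nabla u|^{N}\,dx\le1$, and by equimeasurability $\int_{\Omega}\exp(\alpha|u|^{N/(N-1)})\,dx=\int_{B_{R}}\exp(\alpha|u^{\ast}|^{N/(N-1)})\,dx$. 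Since the $N$-Dirichlet integral is invariant under dilations, rescaling $B_{R}$ to $B_{1}$ multiplies the exponential integral only by $R^{N}=|\Omega|/|B_{1}|$; hence it suffices to bound $\int_{B_{1}}\exp(\alpha|v|^{N/(N-1)})\,dx$ over radial nonincreasing $v\in W_{0}^{1,N}(B_{1})$ with $\|v\|\le1$, and the resulting $C(\Omega)$ will come out proportional to $|\Omega|$.

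For such $v$, writing $v=v(r)$, substituting $r=e^{-t/N}$ with $t\in[0,\infty)$, and setting $w(t)=v(e^{-t/N})$, one has $w(0)=0$, $w$ nondecreasing, and a direct computation gives $\int_{B_{1}}|\nabla v|^{N}\,dx=\omega_{N-1}N^{N-1}\int_{0}^{\infty}|w^{\prime}(t)|^{N}\,dt$ and $\int_{B_{1}}\exp(\alpha|v|^{N/(N-1)})\,dx=\tfrac{\omega_{N-1}}{N}\int_{0}^{\infty}\exp\!\big(\alpha|w(t)|^{N/(N-1)}-t\big)\,dt$, where $\omega_{N-1}=|\mathbb{S}^{N-1}|$. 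Normalizing $\psi=(\omega_{N-1}N^{N-1})^{1/N}w$ turns the constraint into $\int_{0}^{\infty}|\psi^{\prime}|^{N}\,dt\le1$, and one checks that with the sharp value $\alpha=\alpha_{N}=N\,\omega_{N-1}^{1/(N-1)}$ one has exactly $\alpha_{N}|w|^{N/(N-1)}=|\psi|^{N/(N-1)}$. Thus everything reduces to the one-dimensional estimate: there exists $c_{N}<\infty$ so that every absolutely continuous $\psi:[0,\infty)\to\mathbb{R}$ with $\psi(0)=0$ and $\int_{0}^{\infty}|\psi^{\prime}|^{N}\,dt\le1$ satisfies $\int_{0}^{\infty}\exp\!\big(|\psi(t)|^{N/(N-1)}-t\big)\,dt\le c_{N}$; monotonicity of $\alpha\mapsto\exp(\alpha s)$ on $s\ge0$ then yields the bound for all $\alpha\le\alpha_{N}$, which is the displayed inequality with $C(\Omega)=\tfrac{\omega_{N-1}}{N|B_1|}c_N|\Omega|$.

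This one-dimensional lemma is the technical heart, and I expect it to be the main obstacle. One first reduces to $\psi\ge0$ nondecreasing, replacing $\psi(t)$ by $\sup_{[0,t]}|\psi|$, which does not increase the energy and only increases the exponent. H\"older's inequality gives the pointwise bound $|\psi(t)|^{N/(N-1)}\le t$, so the integrand never exceeds $1$; the real issue is integrability at $+\infty$. The key is a quantitative sharpening of H\"older controlling the measure of the near-extremal set $\{t:\,t-\psi(t)^{N/(N-1)}<\lambda\}$ linearly in $\lambda$, obtained by splitting $[0,\infty)$ according to the level sets of the energy $g(t)=\int_{0}^{t}|\psi^{\prime}|^{N}\,ds$ (treating the ranges $g(t)\le1-\delta$ and $g(t)>1-\delta$ separately and estimating $\psi$ on each piece via H\"older on suitable subintervals); one then concludes from the layer-cake identity $\int_{0}^{\infty}\exp(\psi^{N/(N-1)}-t)\,dt=\int_{0}^{\infty}\big|\{t:\,t-\psi(t)^{N/(N-1)}<\lambda\}\big|\,e^{-\lambda}\,d\lambda$ that the integral is finite and bounded by a dimensional constant.

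Two items then remain. For the $L^{q}$ assertion, fix $u\in W_{0}^{1,N}(\Omega)$ and $q\ge1$, and pick $\varphi\in C_{0}^{\infty}(\Omega)$ with $\|u-\varphi\|$ so small that $q(1+\delta)\|u-\varphi\|^{N/(N-1)}\le\alpha_{N}$; using $(a+b)^{N/(N-1)}\le(1+\delta)a^{N/(N-1)}+C_{\delta}b^{N/(N-1)}$ and the boundedness of $\varphi$ one gets $\exp(q|u|^{N/(N-1)})\le C^{\prime}\exp\!\big(q(1+\delta)|u-\varphi|^{N/(N-1)}\big)$, and applying the global bound to $(u-\varphi)/\|u-\varphi\|$ shows the right-hand side lies in $L^{1}(\Omega)$, so $\exp(|u|^{N/(N-1)})\in L^{q}(\Omega)$. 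For optimality I would use the Moser functions on a ball $B_{\rho}(x_{0})\subset\Omega$: $m_{k}(x)=\omega_{N-1}^{-1/N}(\log k)^{(N-1)/N}$ for $|x-x_{0}|\le\rho/k$, $m_{k}(x)=\omega_{N-1}^{-1/N}(\log k)^{-1/N}\log(\rho/|x-x_{0}|)$ for $\rho/k\le|x-x_{0}|\le\rho$, and $m_{k}=0$ otherwise. Then $\|m_{k}\|=1$, while the contribution of the inner ball to $\int_{\Omega}\exp(\alpha|m_{k}|^{N/(N-1)})\,dx$ equals a fixed positive multiple of $k^{\,\alpha/\omega_{N-1}^{1/(N-1)}-N}$, which tends to $+\infty$ as $k\to\infty$ exactly when $\alpha>\alpha_{N}$; hence the supremum is $+\infty$ for every such $\alpha$.
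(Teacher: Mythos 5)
First, note that the paper does not prove this lemma at all: it is quoted as the classical Moser--Trudinger inequality and attributed to \cite{T} and \cite{M}, so any complete argument you give is necessarily ``extra'' relative to the text. Your route is exactly Moser's original one, and most of it is carried out correctly: the reduction via Schwarz symmetrization and P\'olya--Szeg\H{o} to radial nonincreasing functions on a ball, the dilation invariance giving $C(\Omega)\sim|\Omega|$, the change of variables $r=e^{-t/N}$ with the identities for the Dirichlet and volume integrals, the normalization showing $\alpha_N|w|^{N/(N-1)}=|\psi|^{N/(N-1)}$ precisely when $\alpha_N=N\omega_{N-1}^{1/(N-1)}$, the density argument for the $L^q$ statement (which correctly handles arbitrary $u$, not just $\|u\|\le1$), and the sharpness computation with the Moser functions, whose inner-ball contribution is of order $k^{\alpha/\omega_{N-1}^{1/(N-1)}-N}$, are all sound.

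The genuine gap is the one-dimensional lemma, which you yourself flag as the technical heart and then only describe. Everything about the sharp constant $\alpha=\alpha_N$ is concentrated there: the plain H\"older estimate you invoke elsewhere degenerates at exactly this point. Indeed, if $t_1<t_2$ both satisfy $\psi(t_i)^{N/(N-1)}>t_i-\lambda$, then combining $\psi(t_1)\le K(t_1)^{1/N}t_1^{(N-1)/N}$ with $\psi(t_2)\le\psi(t_1)+(1-K(t_1))^{1/N}(t_2-t_1)^{(N-1)/N}$ and H\"older for sums yields only $(t_2-\lambda)^{(N-1)/N}<t_2^{(N-1)/N}$, i.e.\ no information; so the asserted bound $\bigl|\{t:\,t-\psi(t)^{N/(N-1)}<\lambda\}\bigr|\le C_N(1+\lambda)$ requires a quantitative stability version of these inequalities (near-equality in H\"older forces $t_1/K(t_1)\approx(t_2-t_1)/(1-K(t_1))$, and the logarithmic cost of tracking the envelope $\psi^{N/(N-1)}\approx t-\lambda$ against the remaining energy), which you gesture at (``splitting according to $g(t)\le1-\delta$ and $g(t)>1-\delta$'') but do not establish. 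The layer-cake reduction is fine, and the measure estimate is in fact true, but as written the proposal assumes the only nontrivial step rather than proving it; for the purposes of this paper the honest course is either to carry out Moser's Lemma in full or, as the authors do, simply to cite \cite{M}.
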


\section{\bigskip The improved subcritical polynomial growth $(SCPI)$-Proof of
Theorem 1}

In this section, we study the problem (\ref{1.2}) in the case $1<p<N$. As we
mentioned earlier, there have been a lot of papers about the existence of
nontrivial nonnegative solutions without the the $(AR)$-condition in the case
of subcritical polynomial growth. Nevertheless, almost all of them consider
the problem (\ref{1.2}) under the nonlinear term $f$ satisfies the condition
$(SCP)$ which is stronger than our condition $(SCPI)$. In \cite{LW}, the
authors had a similar result to ours by using the Nehari condition type to
replace for the $(AR)$ condition. Here, we will show that we can use a
suitable Mountain Pass Theorem to get our desired result.

\begin{lemma}
Let $f$ satisfy $(L1),~(L2),~(L4),~(SCPI)$. Then $J$ satisfies the conditions
$(i)$ and $(ii)$ of Lemma 1.
\end{lemma}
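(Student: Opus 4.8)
The plan is to verify the two geometric conditions of the Mountain Pass Theorem (Lemma 1) for the functional $J(u)=\frac{1}{p}\|u\|^{p}-\int_{\Omega}F(x,u)\,dx$ on $X=W_{0}^{1,p}(\Omega)$. Condition $(i)$ asks for a sphere $\partial B_{\rho}$ on which $J$ stays above a positive constant $\alpha$; condition $(ii)$ asks for a point $e$ outside $B_{\rho}$ with $J(e)\le 0$. The hypothesis $J(0)=0$ is immediate since $F(x,0)=0$.

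\medskip

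\textbf{Mountain pass geometry near the origin (condition $(i)$).} First I would use $(L4)$ together with $(SCPI)$ to control $F$ from above. From $(L4)$, there exist $\varepsilon>0$ and $\delta>0$ so that $pF(x,u)\le(\lambda_{1}(\Omega)-\varepsilon)|u|^{p}$ for $0\le u\le\delta$ and all $x$; by $(L1)$ the same holds trivially for $u\le0$. From $(SCPI)$, for any $q\in(p,p^{\ast})$ there is a constant $C=C(q,\delta)$ with $F(x,u)\le C|u|^{q}$ for $u\ge\delta$ (integrate the bound $f(x,s)\le\epsilon' s^{p^{\ast}-1}+C_{\epsilon'}$ coming from $(SCPI)$, or more simply use $F(x,u)\le C_{1}+C_{2}|u|^{q}$ and absorb the constant since $u$ is bounded below by $\delta$ on this range). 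Combining, for every $x$ and every $u\in\mathbb{R}$,
\[
F(x,u)\le\frac{\lambda_{1}(\Omega)-\varepsilon}{p}|u|^{p}+C|u|^{q}.
\]
Integrating over $\Omega$, using the variational characterization $\|u\|_{p}^{p}\le\lambda_{1}(\Omega)^{-1}\|u\|^{p}$ and the continuous Sobolev embedding $W_{0}^{1,p}(\Omega)\hookrightarrow L^{q}(\Omega)$ (valid since $q<p^{\ast}$), I get
\[
J(u)\ge\frac{1}{p}\|u\|^{p}-\frac{\lambda_{1}(\Omega)-\varepsilon}{p\,\lambda_{1}(\Omega)}\|u\|^{p}-C'\|u\|^{q}
=\frac{\varepsilon}{p\,\lambda_{1}(\Omega)}\|u\|^{p}-C'\|u\|^{q}.
\]
Since $q>p$, for $\|u\|=\rho$ with $\rho>0$ small enough this right-hand side is a positive constant $\alpha$, giving $(i)$.

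\medskip

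\textbf{Mountain pass geometry far out (condition $(ii)$).} Here I would use the $p$-superlinearity encoded in $(L2)$: $F(x,u)/u^{p}\to+\infty$ uniformly in $x$ as $u\to+\infty$. Fix any nonnegative $\varphi\in W_{0}^{1,p}(\Omega)\setminus\{0\}$, say $\varphi\ge0$ with $\varphi\not\equiv0$ (e.g. a positive bump), and consider $J(t\varphi)$ for $t>0$. For a large parameter $M>0$, $(L2)$ gives a constant $C_{M}$ with $F(x,u)\ge M u^{p}-C_{M}$ for all $u\ge0$, $x\in\Omega$; hence
\[
J(t\varphi)\le\frac{t^{p}}{p}\|\varphi\|^{p}-M t^{p}\!\int_{\Omega}\varphi^{p}\,dx+C_{M}|\Omega|.
\]
Choosing $M$ so that $M\int_{\Omega}\varphi^{p}\,dx>\frac{1}{p}\|\varphi\|^{p}$, the coefficient of $t^{p}$ is negative, so $J(t\varphi)\to-\infty$ as $t\to+\infty$. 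In particular there is $t_{0}$ with $\|t_{0}\varphi\|>\rho$ and $J(t_{0}\varphi)\le0$; set $e=t_{0}\varphi$.

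\medskip

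I expect the only mildly delicate point to be the bookkeeping in condition $(i)$: one must combine the near-origin estimate from $(L4)$ (which is sharp, involving exactly $\lambda_{1}$) with the intermediate/large-$u$ estimate from $(SCPI)$ in such a way that the $\|u\|^{p}$ term is genuinely dominated and a leftover positive multiple of $\|u\|^{p}$ survives; this is why it is essential that $(L4)$ gives a strict inequality $<\lambda_{1}(\Omega)$ rather than $\le$. The passage from $(SCPI)$ to an integrable bound $F(x,u)\le C_{1}|u|^{p}+C_{2}|u|^{q}$ with $p<q<p^{\ast}$ is standard and uses that $\Omega$ is bounded so that the constant term is absorbed. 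Everything else is routine, and no compactness (hence no use of $(SCPI)$ beyond the growth bound, and no use of $(L3)$) is needed for this lemma.
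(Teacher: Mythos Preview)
Your overall approach matches the paper's: use $(L2)$ to send $J(tu)\to-\infty$ along a fixed nonnegative $u$, and combine $(L4)$ with the subcritical growth to get a lower bound of the form $c_{1}\|u\|^{p}-c_{2}\|u\|^{r}$ with $r>p$. Your argument for condition $(ii)$ is essentially identical to the paper's.

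There is, however, a slip in your treatment of condition $(i)$. You claim that $(SCPI)$ yields $F(x,u)\le C|u|^{q}$ for some fixed $q\in(p,p^{\ast})$ on the range $u\ge\delta$, but $(SCPI)$ does not give this. The condition only says $f(x,s)/s^{p^{\ast}-1}\to 0$, so integrating produces a bound with exponent $p^{\ast}$, not anything strictly smaller. Concretely, $f(x,s)=s^{p^{\ast}-1}/\log(2+s)$ satisfies $(SCPI)$, yet $F(x,s)/s^{q}\to\infty$ for every $q<p^{\ast}$; thus neither of your parenthetical justifications goes through. The fix is exactly what the paper does: take $r=p^{\ast}$. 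From $(L4)$ and $(SCPI)$ one obtains
\[
F(x,s)\le\frac{1}{p}(\lambda_{1}-\tau)|s|^{p}+C|s|^{p^{\ast}},\qquad (x,s)\in\Omega\times\mathbb{R},
\]
and then the \emph{continuous} (not compact, but that is all one needs here) critical Sobolev embedding $W_{0}^{1,p}(\Omega)\hookrightarrow L^{p^{\ast}}(\Omega)$ gives
\[
J(u)\ge\frac{\tau}{p\lambda_{1}}\|u\|^{p}-C\|u\|^{p^{\ast}},
\]
from which the existence of $\rho,\alpha>0$ with $J|_{\partial B_{\rho}}\ge\alpha$ follows since $p^{\ast}>p$. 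With this correction your proof coincides with the paper's.
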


\begin{proof}
Let $u$ $\in W_{0}^{1,p}\left(  \Omega\right)  \setminus\left\{  0\right\}
,~u\geq0$. By $(L2),$ for all $M,$ there exists $d$ such that for all $\left(
x,s\right)  \in\Omega\times%
\mathbb{R}
^{+}$
\begin{equation}
F\left(  x,s\right)  \geq Ms^{p}-d. \label{Condition 1}%
\end{equation}
Then
\begin{align*}
J(tu)  &  \leq\frac{t^{p}}{p}\left\Vert u\right\Vert ^{p}-Mt^{p}\int_{\Omega
}\left\vert u\right\vert ^{p}dx+O(1)\\
&  =t^{p}\left(  \frac{\left\Vert u\right\Vert ^{p}}{p}-M\int_{\Omega
}\left\vert u\right\vert ^{p}dx\right)  +O(1)
\end{align*}
Now, choose $M>\frac{\left\Vert u\right\Vert ^{p}}{p\left\Vert u\right\Vert
_{p}^{p}}$, we have $J(tu)\rightarrow-\infty$ as $t\rightarrow\infty$, so $J$
satisfies $(ii)$ of Lemma 1.

Next, by $\left(  L4\right)  $ and $\left(  SCPI\right)  ,$ there exist
$C,\tau>0$ such that
\begin{equation}
F(x,s)\leq\frac{1}{p}\left(  \lambda_{1}-\tau\right)  \left\vert s\right\vert
^{p}+C\left\vert s\right\vert ^{p^{\ast}},~\forall\left(  x,s\right)
\in\Omega\times%
\mathbb{R}
\label{Condition 2}%
\end{equation}
Thus by the definition of $\lambda_{1}\left(  \Omega\right)  ~$and the Sobolev embedding:%

\[
J(u)\geq\frac{1}{p}\left(  1-\frac{\left(  \lambda_{1}-\tau\right)  }%
{\lambda_{1}}\right)  \left\Vert u\right\Vert ^{p}-C\left\Vert u\right\Vert
^{p^{\ast}}%
\]
Since $\tau>0$ and $p^{\ast}>p$, we may choose $\rho,\delta>0$ such that
$J(u)\geq\delta$ if $\left\Vert u\right\Vert =\rho$ and so, $J$ satisfies
$(i)$ of the Lemma 1.
\end{proof}

Next, we will check that $J$ satisfies the $(C)_{c}$ for all real numbers $c$.

\begin{lemma}
\bigskip Assume $(L1),~(L2),$ $(L3)$ and $(L4)$ hold. If $f$ has the improved
subcritical polynomial growth on $\Omega~(SCPI)$, then $J$ satisfies $(C)_{c}$
for all $c\in%
\mathbb{R}
.$
\end{lemma}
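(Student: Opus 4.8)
The plan is to show that any Cerami sequence $\{u_n\}$ at level $c$ — i.e.\ $J(u_n)\to c$ and $\|DJ(u_n)\|_{X^*}(1+\|u_n\|)\to 0$ — is bounded in $W_0^{1,p}(\Omega)$, and then upgrade weak convergence to strong convergence. First I would establish boundedness. Arguing by contradiction, suppose $\|u_n\|\to\infty$ and set $v_n=u_n/\|u_n\|$, so $\|v_n\|=1$; passing to a subsequence, $v_n\rightharpoonup v$ in $W_0^{1,p}$, $v_n\to v$ in $L^p(\Omega)$ (here $p<p^*$, so the compact embedding is still available) and $v_n\to v$ a.e. The Cerami condition gives $pJ(u_n)-DJ(u_n)u_n\to pc$, i.e.\ $\int_\Omega H(x,u_n)\,dx = \int_\Omega\big(u_nf(x,u_n)-pF(x,u_n)\big)\,dx$ stays bounded. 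The key is to exploit $(L3)$: for a cleverly chosen comparison — the standard trick is to pick $t_n\in[0,1]$ with $J(t_nu_n)=\max_{t\in[0,1]}J(tu_n)$ — one writes $H(x,t_nu_n)\le \theta H(x,u_n)+C_*$ whenever $t_nu_n<u_n$, which together with the identity $pJ(t_nu_n)\le \frac1p$-type bounds and $DJ(t_nu_n)(t_nu_n)=0$ (or $\le$ small) forces $J(t_nu_n)$ to be bounded. On the other hand, if $\|u_n\|\to\infty$ then for any fixed $R>0$, $Rv_n$ has norm $R$, hence $t_n u_n = \|u_n\| t_n v_n$ eventually dominates $Rv_n$ in the sense of the maximization, so $J(t_nu_n)\ge J(Rv_n) = \frac{R^p}{p} - \int_\Omega F(x,Rv_n)\,dx$. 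Combining this with $(L2)$ (through the consequence $(\ref{Condition 1})$: $F(x,s)\ge Ms^p-d$) on the set where $v>0$ — which must have positive measure, otherwise $v\equiv0$ and the analysis of $F(x,Rv_n)$ with $v_n\to0$ in $L^p$ gives a contradiction with the unboundedness of $J(t_nu_n)$ — yields $J(t_nu_n)\to+\infty$, contradicting the boundedness just obtained. This is the part I expect to be the main obstacle: organizing the interplay between the maximization point $t_n$, the growth condition $(L3)$, and the superlinearity $(L2)$ so that the contradiction is airtight; one must also handle separately the degenerate case $v\equiv0$, typically by showing $\int_\Omega F(x,Rv_n)\,dx\to 0$ using $(SCPI)$ and the $L^{p^*}$-boundedness of $v_n$, so that $J(t_nu_n)\ge R^p/p - o(1)$ for every $R$, again contradicting boundedness.

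Once $\{u_n\}$ is bounded, I would pass to a subsequence with $u_n\rightharpoonup u$ in $W_0^{1,p}$, $u_n\to u$ in $L^q(\Omega)$ for all $1\le q<p^*$, and $u_n\to u$ a.e. Using $(SCPI)$ one controls $f(x,u_n)$: given $\varepsilon>0$ there is $C_\varepsilon$ with $|f(x,s)|\le \varepsilon|s|^{p^*-1}+C_\varepsilon$, so $\{f(x,u_n)u_n\}$ and $\{f(x,u_n)(u_n-u)\}$ are controlled, and a standard argument (splitting the $L^{p^*}$-part, which is small by choosing $\varepsilon$ small, from the lower-order part, which goes to zero by strong $L^q$-convergence for $q<p^*$) gives $\int_\Omega f(x,u_n)(u_n-u)\,dx\to 0$. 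Then from $DJ(u_n)(u_n-u)\to 0$ we get $\int_\Omega |\nabla u_n|^{p-2}\nabla u_n\cdot\nabla(u_n-u)\,dx\to 0$. Since $u_n\rightharpoonup u$ also gives $\int_\Omega |\nabla u|^{p-2}\nabla u\cdot\nabla(u_n-u)\,dx\to0$, subtracting and invoking the standard $(S_+)$-property / monotonicity inequality for the $p$-Laplacian operator ($\langle |\xi|^{p-2}\xi-|\eta|^{p-2}\eta,\xi-\eta\rangle\ge 0$, with the elementary Simon-type quantitative lower bounds) forces $\nabla u_n\to\nabla u$ in $L^p(\Omega)$, hence $u_n\to u$ strongly in $W_0^{1,p}(\Omega)$. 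This establishes $(C)_c$ for every $c\in\mathbb{R}$.

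A few remarks on the bookkeeping. Throughout, I would use the sign convention from $(L1)$ that $f(x,u)=0$ for $u\le0$, so that all the growth estimates only need to be checked for $u\ge0$ and the Cerami sequence effectively lives in the nonnegative cone up to the usual truncation argument (testing $DJ(u_n)$ against $u_n^-$ shows $u_n^-\to0$). The role of $(L4)$ here is minor — it was already used in Lemma~4 for the geometry — but it does not interfere. The genuinely delicate point remains the boundedness step: without $(AR)$ the usual one-line estimate $\theta J(u_n)-DJ(u_n)u_n\ge(\theta-p)\|u_n\|^p/p - \text{const}$ is unavailable, and it is exactly the quasi-monotonicity $(L3)$ on $H(x,u)=uf(x,u)-pF(x,u)$ evaluated along the scaling $t\mapsto J(tu_n)$ that replaces it, in the spirit of Jeanjean's argument; I would make sure the maximizer $t_n$ either lies in the open interval $(0,1)$ — giving $\frac{d}{dt}J(tu_n)|_{t_n}=0$, i.e.\ $DJ(t_nu_n)(t_nu_n)=0$ — or equals $1$, in which case $J(u_n)=\max_{[0,1]}J(tu_n)$ is itself bounded by assumption and there is nothing to prove.
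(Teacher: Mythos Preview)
Your overall architecture matches the paper's proof exactly: Jeanjean's scaling trick (maximize $J(tu_n)$ over $t\in[0,1]$), the quasi-monotonicity $(L3)$ to bound $J(t_nu_n)$ from above by $\theta\int_\Omega H(x,u_n)\,dx+O(1)=O(1)$, the growth condition $(SCPI)$ to push $J(t_nu_n)$ from below to $+\infty$, and finally the $(S_+)$-property of $-\Delta_p$ together with the splitting $|f(x,s)|\le C_\varepsilon+\varepsilon|s|^{p^*-1}$ for the strong convergence. The second half of your outline is essentially identical to the paper.

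However, your treatment of the dichotomy between $v^+\equiv 0$ and $v^+\not\equiv 0$ is inverted, and as written contains a genuine gap. You claim that combining $J(t_nu_n)\ge J(Rv_n)$ with $(L2)$ on the set $\{v>0\}$ ``yields $J(t_nu_n)\to+\infty$'', but this is precisely where that lower bound \emph{fails}: condition $(L2)$ (i.e.\ $F(x,s)\ge Ms^p-d$) makes $\int_\Omega F(x,Rv_n)\,dx$ large when $v^+\not\equiv 0$, so $J(Rv_n)$ is driven to $-\infty$ in $R$ and the inequality $J(t_nu_n)\ge J(Rv_n)$ gives no information. In the paper the case $|\{v^+>0\}|>0$ is eliminated \emph{before} the maximizer $t_n$ is ever introduced, by a direct energy argument: on $\{v^+>0\}$ one has $u_n^+\to+\infty$ a.e., so $(L2)$ and Fatou's lemma give
\[
\liminf_{n\to\infty}\int_\Omega \frac{F(x,u_n^+)}{\|u_n\|^p}\,dx
\;\ge\;\int_{\{v^+>0\}}\liminf_{n\to\infty}\frac{F(x,u_n^+)}{|u_n^+|^p}\,|v_n^+|^p\,dx
\;=\;+\infty,
\]
which contradicts $\int_\Omega F(x,u_n^+)/\|u_n\|^p=\tfrac{1}{p}-J(u_n)/\|u_n\|^p\to\tfrac{1}{p}$. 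Only \emph{after} one knows $v^+\equiv 0$ does the lower bound become useful: then, exactly as you describe for the ``degenerate'' case, $(SCPI)$ and the $L^{p^*}$-boundedness of $v_n$ give $J(Rv_n)\ge R^p/p-o_n(1)-C(\Omega)$, whence $J(t_nu_n)\to+\infty$, contradicting the upper bound from $(L3)$. In short, what you call the ``degenerate case $v\equiv 0$'' is in fact the \emph{only} surviving case; reorganize the logic so that $(L2)$ is used to exclude $v^+\not\equiv 0$ rather than to feed the lower bound on $J(t_nu_n)$.
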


\begin{proof}
Let $\left\{  u_{n}\right\}  $ be a Cerami sequence in $W_{0}^{1,p}\left(
\Omega\right)  $ such that
\begin{align*}
\left(  1+\left\Vert u_{n}\right\Vert \right)  \left\Vert DJ(u_{n}%
)\right\Vert  &  \rightarrow0\\
J(u_{n})  &  \rightarrow c
\end{align*}
i.e.
\begin{align}
\left(  1+\left\Vert u_{n}\right\Vert \right)  \left\vert \int_{\Omega
}\left\vert \nabla u_{n}\right\vert ^{p-2}\nabla u_{n}\nabla vdx-\int_{\Omega
}f(x,u_{n})vdx\right\vert  &  \leq\varepsilon_{n}\left\Vert v\right\Vert
\label{3.1}\\
\frac{1}{p}\left\Vert u_{n}\right\Vert ^{p}-\int_{\Omega}F(x,u_{n})dx  &
\rightarrow c\nonumber
\end{align}
where $\varepsilon_{n}\overset{n\rightarrow\infty}{\rightarrow}0$. We first
show that $\left\{  u_{n}\right\}  $ is bounded which is our main purpose in
this paper. Indeed, suppose that
\begin{equation}
\left\Vert u_{n}\right\Vert \rightarrow\infty\label{3.2}%
\end{equation}
Setting
\[
v_{n}=\frac{u_{n}}{\left\Vert u_{n}\right\Vert }%
\]
then $\left\Vert v_{n}\right\Vert =1$ so we can suppose that $v_{n}%
\rightharpoonup v$ in $W_{0}^{1,p}\left(  \Omega\right)  $. We may similarly
show that $v_{n}^{+}\rightharpoonup v^{+}$ in $W_{0}^{1,p}\left(
\Omega\right)  $, where $w^{+}=\max\left\{  w,0\right\}  .$ Since $\Omega$ is
bounded, Sobolev's imbedding theorem implies that $\left\{
\begin{array}
[c]{l}%
v_{n}^{+}(x)\rightarrow v^{+}(x)\text{ a.e. in }\Omega\\
v_{n}^{+}\rightarrow v^{+}\text{ in }L^{q}\left(  \Omega\right)
,~\forall1\leq q<p^{\ast}%
\end{array}
\right.  .$ We wish to show that $v^{+}=0$ a.e. $\Omega.$ Indeed, if
$\Omega^{+}=\left\{  x\in\Omega:v^{+}\left(  x\right)  >0\right\}  $ has a
positive measure$,$ then in $\Omega^{+}$, we have
\[
\underset{n\rightarrow\infty}{\lim}u_{n}^{+}(x)=\underset{n\rightarrow\infty
}{\lim}v_{n}^{+}(x)\left\Vert u_{n}\right\Vert =+\infty
\]
and thus by $(L2):$
\[
\underset{n\rightarrow\infty}{\lim}\frac{F\left(  x,u_{n}^{+}(x)\right)
}{\left\vert u_{n}^{+}(x)\right\vert ^{p}}=+\infty\text{ a.e. in }\Omega^{+}%
\]
This means that
\begin{equation}
\underset{n\rightarrow\infty}{\lim}\frac{F\left(  x,u_{n}^{+}(x)\right)
}{\left\vert u_{n}^{+}(x)\right\vert ^{p}}\left\vert v_{n}^{+}(x)\right\vert
^{p}=+\infty\text{ a.e. in }\Omega^{+} \label{3.3}%
\end{equation}
and so
\begin{equation}
\int_{\Omega^{+}}\underset{n\rightarrow\infty}{\lim\inf}\frac{F\left(
x,u_{n}^{+}(x)\right)  }{\left\vert u_{n}^{+}(x)\right\vert ^{p}}\left\vert
v_{n}^{+}(x)\right\vert ^{p}dx=+\infty\label{3.4}%
\end{equation}
Also, by ($\ref{3.1}$), we see that
\[
\left\Vert u_{n}\right\Vert ^{p}=pc+p\int_{\Omega}F(x,u_{n}^{+}\left(
x\right)  )dx+o(1)
\]
which implies that
\[
\int_{\Omega}F(x,u_{n}^{+}\left(  x\right)  )dx\rightarrow+\infty
\]
and
\begin{align}
&  \underset{n\rightarrow\infty}{\lim\inf}\int_{\Omega}\frac{F\left(
x,u_{n}^{+}(x)\right)  }{\left\Vert u_{n}\right\Vert ^{p}}dx\label{3.5}\\
&  =\underset{n\rightarrow\infty}{\lim\inf}\frac{\int_{\Omega^{+}}F\left(
x,u_{n}^{+}(x)\right)  dx}{pc+p\int_{\Omega}F(x,u_{n}^{+}\left(  x\right)
)dx+o(1)}\nonumber\\
&  =\frac{1}{p}\nonumber
\end{align}
Now, note that $F(x,s)\geq0$, by Fatou's lemma and (\ref{3.4}) and
(\ref{3.5}), we get a contradiction. So $v\leq0$ a.e.

Letting $t_{n}\in\left[  0,1\right]  $ such that
\[
J\left(  t_{n}u_{n}\right)  =\underset{t\in\left[  0,1\right]  }{\max}J\left(
tu_{n}\right)
\]
For all $R>0$, by $(SCPI)$, there exists $C>0$ such that
\begin{equation}
F(x,s)\leq C\left\vert s\right\vert +\frac{1}{R^{p^{\ast}}}s^{p^{\ast}%
},~\forall\left(  x,s\right)  \in\Omega\times%
\mathbb{R}
. \label{3.6}%
\end{equation}
Also since $\left\Vert u_{n}\right\Vert \rightarrow\infty$, we have for $n$
sufficient large:
\begin{equation}
J\left(  t_{n}u_{n}\right)  \geq J\left(  \frac{R}{\left\Vert u_{n}\right\Vert
}u_{n}\right)  =J\left(  Rv_{n}\right)  \label{3.7}%
\end{equation}
and by (\ref{3.6}) with note that $\int_{\Omega}F\left(  x,v_{n}\right)
dx=\int_{\Omega}F\left(  x,v_{n}^{+}\right)  dx$:
\begin{align}
pJ\left(  Rv_{n}\right)   &  \geq R^{p}-pC\int_{\Omega}\left\vert Rv_{n}%
^{+}(x)\right\vert dx-\frac{p}{R^{p^{\ast}}}\int_{\Omega}\left\vert Rv_{n}%
^{+}\right\vert ^{p^{\ast}}dx\label{3.8}\\
&  =R^{p}-pRC\int_{\Omega}\left\vert v_{n}^{+}(x)\right\vert dx-p\int_{\Omega
}\left\vert v_{n}^{+}\right\vert ^{p^{\ast}}dx\nonumber
\end{align}
Since $v_{n}^{+}\rightharpoonup0$ weakly in $W_{0}^{1,p}\left(  \Omega\right)
$, thus $\int_{\Omega}\left\vert v_{n}^{+}\right\vert ^{p^{\ast}}dx$ is
bounded by a universal constant $C\left(  \Omega\right)  >0$ and also
$\int_{\Omega}\left\vert v_{n}^{+}(x)\right\vert dx\rightarrow0$. Thus if we
let $n\rightarrow\infty$ in (\ref{3.8}), and then let $R\rightarrow\infty$ and
using (\ref{3.7}), we get
\begin{equation}
J\left(  t_{n}u_{n}\right)  \rightarrow\infty\label{3.9}%
\end{equation}
Note that $J(0)=0$ and $J(u_{n})\rightarrow c$, we can suppose that $t_{n}%
\in\left(  0,1\right)  $. Thus $DJ(t_{n}u_{n})t_{n}u_{n}=0,$ i.e.,%
\[
t_{n}^{p}\left\Vert u_{n}\right\Vert ^{p}=\int_{\Omega}f\left(  x,t_{n}%
u_{n}\right)  t_{n}u_{n}dx
\]
Also, by (\ref{3.1})%
\begin{align*}
\int_{\Omega}\left[  f\left(  x,u_{n}\right)  u_{n}-pF\left(  x,u_{n}\right)
\right]  dx  &  =\left\Vert u_{n}\right\Vert ^{p}+pc-\left\Vert u_{n}%
\right\Vert ^{p}+o(1)\\
&  =pc+o(1)
\end{align*}
So by $(L3):$
\begin{align*}
pJ\left(  t_{n}u_{n}\right)   &  =t_{n}^{p}\left\Vert u_{n}\right\Vert
^{p}-p\int_{\Omega}F\left(  x,t_{n}u_{n}\right)  dx\\
&  =\int_{\Omega}\left[  f\left(  x,t_{n}u_{n}\right)  t_{n}u_{n}-pF\left(
x,t_{n}u_{n}\right)  \right]  dx\\
&  \leq\theta\int_{\Omega}\left[  f\left(  x,u_{n}\right)  u_{n}-pF\left(
x,u_{n}\right)  \right]  dx+O(1)\\
&  \leq O(1)
\end{align*}
which is a contraction to (\ref{3.9}). This proves that $\left\{
u_{n}\right\}  $ is bounded in $W_{0}^{1,p}\left(  \Omega\right)  $. Without
loss of generality, we can suppose that
\[
\left\{
\begin{array}
[c]{l}%
u_{n}\rightharpoonup u\text{ in }W_{0}^{1,p}\left(  \Omega\right) \\
u_{n}\left(  x\right)  \rightarrow u\left(  x\right)  \text{ a.e. }\Omega\\
u_{n}\longrightarrow u\text{ in }L^{q}\left(  \Omega\right)  ,~\forall1\leq
q<p^{\ast}.
\end{array}
\right.
\]
Now, since $f$ has the subcritical growth on $\Omega$, for every
$\varepsilon>0$, we can find a constant $C(\varepsilon)>0$ such that
\[
f\left(  x,s\right)  \leq C(\varepsilon)+\varepsilon\left\vert s\right\vert
^{p^{\ast}-1},~\forall\left(  x,s\right)  \in\Omega\times%
\mathbb{R}
\]
then%
\begin{align*}
&  \left\vert \int_{\Omega}f\left(  x,u_{n}\right)  \left(  u_{n}-u\right)
dx\right\vert \\
\leq &  C(\varepsilon)\int_{\Omega}\left\vert \left(  u_{n}-u\right)
\right\vert dx+\varepsilon\int_{\Omega}\left\vert \left(  u_{n}-u\right)
\right\vert \left\vert u_{n}\right\vert ^{p^{\ast}-1}dx\\
\leq &  C(\varepsilon)\int_{\Omega}\left\vert \left(  u_{n}-u\right)
\right\vert dx+\varepsilon\left(  \int_{\Omega}\left(  \left\vert
u_{n}\right\vert ^{p^{\ast}-1}\right)  ^{p^{\ast}/(p^{\ast}-1)}dx\right)
^{(p^{\ast}-1)/p^{\ast}}\left(  \int_{\Omega}\left\vert u_{n}-u\right\vert
^{p^{\ast}}dx\right)  ^{1/p^{\ast}}\\
\leq &  C(\varepsilon)\int_{\Omega}\left\vert \left(  u_{n}-u\right)
\right\vert dx+\varepsilon C\left(  \Omega\right)
\end{align*}
Similarly, since $u_{n}\rightharpoonup u$ in $W_{0}^{1,p}\left(
\Omega\right)  ,~\int_{\Omega}\left\vert \left(  u_{n}-u\right)  \right\vert
dx\rightarrow0$. Since $\varepsilon>0$ is arbitrary, we can conclude that
$\int_{\Omega}f\left(  x,u_{n}\right)  \left(  u_{n}-u\right)  dx\rightarrow
0$. Thus we can conclude that
\begin{equation}
\int_{\Omega}\left(  f\left(  x,u_{n}\right)  -f\left(  x,u\right)  \right)
\left(  u_{n}-u\right)  dx\overset{n\rightarrow\infty}{\rightarrow}0
\label{3.10}%
\end{equation}
By (\ref{3.1}), we have
\begin{equation}
\left\langle DJ(u_{n})-DJ(u),(u_{n}-u)\right\rangle \overset{n\rightarrow
\infty}{\rightarrow}0 \label{3.11}%
\end{equation}
From (\ref{3.10}) and (\ref{3.11}), we get
\[
\int_{\Omega}\left(  \left\vert \nabla u_{n}\right\vert ^{p-2}\nabla
u_{n}-\left\vert \nabla u\right\vert ^{p-2}\nabla u\right)  \left(  \nabla
u_{n}-\nabla u\right)  \rightarrow0
\]
Using an elementary inequality
\[
2^{2-p}\left\vert b-a\right\vert ^{p}\leq\left\langle \left\vert b\right\vert
^{p-2}b-\left\vert a\right\vert ^{p-2}a,b-a\right\rangle ,~\forall a,b\in%
\mathbb{R}
^{p}%
\]
we can deduce that
\[
\nabla u_{n}\rightarrow\nabla u\text{ in }L^{p}\left(  \Omega\right)
\]
So we have $u_{n}\overset{n\rightarrow\infty}{\rightarrow}u$ strongly in
$W_{0}^{1,p}\left(  \Omega\right)  $ which means that $J$ satisfies $(C)_{c}$.
\end{proof}

\subsection{Proof of Theorem 1}

Combing Lemma 5 and Mountain Pass Theorem (Lemma 1), we can easily deduce that the problem
(\ref{1.2}) has a nontrivial weak solution.

\section{The subcritical exponential growth-Proof of Theorem 2}

In this section, we will study the problem (\ref{1.2}) in the case $p=N\geq3$
and $f$ satisfies the $(SCE)$. As far as we know, this appears to be the first
work with the $(AR)$-condition free in the subcritical exponential growth.

\subsection{The geometry of the functional J}

In this subsection, we will check the Mountain Pass properties of the
functional $J$. Similar to Lemma 4, we have the following lemma:

\begin{lemma}
Let $f$ satisfy $(L2)$. Then $J(tu)\rightarrow-\infty$ as $t\rightarrow\infty$
for all nonnegative function $u$ $\in W_{0}^{1,N}\left(  \Omega\right)
\setminus\left\{  0\right\}  .$
\end{lemma}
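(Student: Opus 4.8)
The plan is to mimic the argument from Lemma 4, which handled the subcritical polynomial case, and observe that the only ingredient used there for the ``$J(tu)\to-\infty$'' direction was condition $(L2)$ together with the fact that $F(x,s)\ge 0$ for $s\ge 0$. None of the exponential-growth structure or the Moser--Trudinger inequality enters this particular estimate, so the proof should go through essentially verbatim with $p$ replaced by $N$.

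Concretely, fix a nonnegative $u\in W_0^{1,N}(\Omega)\setminus\{0\}$. From $(L2)$, $\lim_{s\to+\infty}F(x,s)/s^{N}=+\infty$ uniformly in $x$, so for every $M>0$ there is a constant $d=d(M)$ with
\begin{equation*}
F(x,s)\ge M s^{N}-d,\qquad \forall (x,s)\in\Omega\times\mathbb{R}^{+}.
\end{equation*}
(Here one uses that $F(x,\cdot)\ge0$ on $[0,\infty)$ to absorb the bounded region $s$ near $0$ into the constant $d$.) Since $u\ge0$, for $t>0$ we have $F(x,tu(x))\ge M t^{N}u(x)^{N}-d$, hence
\begin{equation*}
J(tu)=\frac{t^{N}}{N}\|u\|^{N}-\int_{\Omega}F(x,tu)\,dx
\le t^{N}\!\left(\frac{\|u\|^{N}}{N}-M\int_{\Omega}|u|^{N}\,dx\right)+d\,|\Omega|.
\end{equation*}
Now choose $M$ large enough that $M>\dfrac{\|u\|^{N}}{N\,\|u\|_{N}^{N}}$; the coefficient of $t^{N}$ is then strictly negative, and letting $t\to\infty$ gives $J(tu)\to-\infty$.

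The only point that needs a word of care — and the mildest ``obstacle'' here — is the legitimacy of the lower bound $F(x,s)\ge Ms^{N}-d$ uniformly over $\Omega$ for \emph{all} $s\ge0$ rather than just large $s$: this is exactly where one invokes $(L1)$ (so that $F$ is continuous and nonnegative on the compact-in-$x$, bounded-in-$s$ region) to control the contribution near $s=0$, absorbing it into $d$. After that, everything is elementary: the integral $\int_\Omega |u|^N\,dx$ is finite and positive because $u\in W_0^{1,N}(\Omega)\setminus\{0\}$, and $|\Omega|<\infty$ since $\Omega$ is bounded. No compactness, no Moser--Trudinger inequality, and no further hypotheses on $f$ are required for this statement, which is why the lemma is stated assuming only $(L2)$.
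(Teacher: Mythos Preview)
Your proof is correct and follows exactly the approach the paper intends: Lemma 6 is stated without a separate proof precisely because it is the $p=N$ instance of the first half of Lemma 4, and you have reproduced that argument verbatim with $p$ replaced by $N$. Your extra remark justifying the uniform lower bound $F(x,s)\ge Ms^{N}-d$ on all of $[0,\infty)$ via continuity on the bounded-$s$ region is a welcome clarification that the paper leaves implicit.
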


This means that the condition $(i)$ in Lemma 1 is satisfied. Now, we will
check the second one:

\begin{lemma}
Let $f$ satisfy $(L1),~(L4),~(SCE).$ Then there exist $\delta,\rho>0$ such
that
\[
J(u)\geq\delta\text{ if }\left\Vert u\right\Vert =\rho
\]

\end{lemma}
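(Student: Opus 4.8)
The plan is to show that the origin is a strict local minimum of $J$ by combining the subcritical exponential growth $(SCE)$ near infinity with the spectral hypothesis $(L4)$ near zero, and then applying the Moser-Trudinger inequality (Lemma 3) to control the superexponential term on a small ball. First I would use $(L4)$ to fix $\tau>0$ small and $\sigma>0$ such that $pF(x,s)\leq(\lambda_1-\tau)|s|^N$ for all $0\leq s\leq\sigma$ and all $x\in\Omega$; note that by $(L1)$ we have $F(x,s)=0$ for $s\leq 0$, so it suffices to bound $F$ on $[0,\infty)$. For the range $s\geq\sigma$, I would invoke $(SCE)$: pick any $\alpha>0$ and $q>N$; then there is $C=C(\alpha,q,\sigma)$ with $F(x,s)\leq C|s|^{q}\exp(\alpha|s|^{N/(N-1)})$ for $s\geq\sigma$ (integrating the pointwise bound on $f$, absorbing lower-order terms since $|s|\geq\sigma$). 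Combining the two ranges gives, for all $(x,s)\in\Omega\times\mathbb{R}$,
\[
F(x,s)\leq\frac{\lambda_1-\tau}{p}|s|^{N}+C|s|^{q}\exp\!\big(\alpha|s|^{N/(N-1)}\big).
\]

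Next I would estimate $J(u)$ from below for $\|u\|=\rho$ with $\rho$ to be chosen small. Using the displayed bound on $F$ and the definition of $\lambda_1(\Omega)$,
\[
J(u)\geq\frac{1}{p}\Big(1-\frac{\lambda_1-\tau}{\lambda_1}\Big)\|u\|^{p}-C\int_{\Omega}|u|^{q}\exp\!\big(\alpha|u|^{N/(N-1)}\big)\,dx
=\frac{\tau}{p\lambda_1}\|u\|^{N}-C\int_{\Omega}|u|^{q}\exp\!\big(\alpha|u|^{N/(N-1)}\big)\,dx.
\]
To handle the integral, apply Hölder with exponents $r,r'$ (conjugate, $r>1$ close to $1$):
\[
\int_{\Omega}|u|^{q}\exp\!\big(\alpha|u|^{N/(N-1)}\big)\,dx
\leq\Big(\int_{\Omega}\exp\!\big(\alpha r|u|^{N/(N-1)}\big)\,dx\Big)^{1/r}\Big(\int_{\Omega}|u|^{qr'}\,dx\Big)^{1/r'}.
\]
Writing $|u|^{N/(N-1)}=\|u\|^{N/(N-1)}\,|u/\|u\||^{N/(N-1)}$, if $\|u\|=\rho$ is small enough that $\alpha r\rho^{N/(N-1)}\leq\alpha_N$, then by Lemma 3 the first factor is bounded by a constant $C(\Omega)$ uniformly in $u$ with $\|u\|=\rho$. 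The second factor is controlled by the Sobolev embedding $W_0^{1,N}(\Omega)\hookrightarrow L^{qr'}(\Omega)$ (valid for every finite exponent), giving $\big(\int_{\Omega}|u|^{qr'}\big)^{1/r'}\leq C\|u\|^{q}=C\rho^{q}$. Hence $J(u)\geq\frac{\tau}{p\lambda_1}\rho^{N}-C\rho^{q}$ on $\|u\|=\rho$, and since $q>N$ this is $\geq\delta>0$ for $\rho$ sufficiently small; choosing such $\rho$ and the corresponding $\delta$ finishes the proof.

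The main obstacle is the uniform control of $\int_{\Omega}\exp(\alpha r|u|^{N/(N-1)})\,dx$ over the sphere $\|u\|=\rho$: this is exactly where the scaling trick $|u|^{N/(N-1)}=\rho^{N/(N-1)}|u/\rho|^{N/(N-1)}$ together with the sharp constant $\alpha_N$ in Lemma 3 is essential, and it forces $\rho$ (not merely $\alpha$) to be small — the freedom to take $\alpha$ as small as we like in $(SCE)$ is what lets us first fix $\alpha,r,q$ and only afterwards shrink $\rho$. Everything else (the splitting of $F$, Hölder, Sobolev embedding) is routine. One should also remark that $J(0)=0$ and that $J$ is $C^1$, both already noted in the preliminaries, so the statement as written follows.
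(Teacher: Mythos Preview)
Your proof is correct and follows essentially the same route as the paper: split $F$ via $(L4)$ near zero and $(SCE)$ near infinity to obtain $F(x,s)\le \tfrac{1}{N}(\lambda_1-\tau)|s|^N + C|s|^q\exp(\kappa|s|^{N/(N-1)})$, apply H\"older and the Moser--Trudinger inequality (Lemma~3) with the scaling $|u|^{N/(N-1)}=\|u\|^{N/(N-1)}|u/\|u\||^{N/(N-1)}$ to control the exponential term for $\|u\|$ small, and conclude $J(u)\ge c_1\rho^N - c_2\rho^q$ with $q>N$. The only cosmetic point is that in this section $p=N$, so your mixed use of $p$ and $N$ should be unified to $N$.
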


\begin{proof}
By $\left(  L4\right)  $ and $\left(  SCE\right)  ,$ there exist $\kappa
,\tau>0~$and $q>N~$such that
\[
F(x,s)\leq\frac{1}{N}\left(  \lambda_{1}-\tau\right)  \left\vert s\right\vert
^{N}+C\exp\left(  \kappa\left\vert s\right\vert ^{N/(N-1)}\right)  \left\vert
s\right\vert ^{q},~\forall\left(  x,s\right)  \in\Omega\times%
\mathbb{R}%
\]
By Holder's inequality and the Moser-Trudinger embedding, we have:
\begin{align*}
\int_{\Omega}\exp\left(  \kappa\left\vert u\right\vert ^{N/(N-1)}\right)
\left\vert u\right\vert ^{q}dx  &  \leq\left(  \int_{\Omega}\exp\left(  \kappa
r\left\Vert u\right\Vert ^{N/(N-1)}\left(  \frac{\left\vert u\right\vert
}{\left\Vert u\right\Vert }\right)  ^{N/(N-1)}\right)  dx\right)
^{1/r}\left(  \int_{\Omega}\left\vert u\right\vert ^{r^{\prime}q}dx\right)
^{1/r^{\prime}}\\
&  \leq C\left(  \int_{\Omega}\left\vert u\right\vert ^{r^{\prime}q}dx\right)
^{1/r^{\prime}}%
\end{align*}
if $r>1$ sufficiently close to 1 and $\left\Vert u\right\Vert \leq\sigma$,
where $\kappa r\sigma^{N/(N-1)}<\alpha_{N}$. Thus by the definition of
$\lambda_{1}~$and the Sobolev embedding:%

\[
J(u)\geq\frac{1}{N}\left(  1-\frac{\left(  \lambda_{1}-\tau\right)  }%
{\lambda_{1}}\right)  \left\Vert u\right\Vert ^{N}-C\left\Vert u\right\Vert
^{q}%
\]
Since $\tau>0$ and $q>N$, we may choose $\rho,\delta>0$ such that
$J(u)\geq\delta$ if $\left\Vert u\right\Vert =\rho$.
\end{proof}

Again, it's very important to check that $J$ satisfies the $(C)_{c}$ for all
real numbers $c$. Similar to what we have shown in the previous section, we
have the following lemma:

\begin{lemma}
\bigskip Assume $(L1),~(L2),$ $(L3)$ and $(L4)$ hold. If $f$ has subcritical
exponential growth on $\Omega~(SCE)$, then $J$ satisfies $(C)_{c}$ for all
$c\in%
\mathbb{R}
.$
\end{lemma}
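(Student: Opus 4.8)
The plan is to mirror the structure of the proof of Lemma~6 (the $(C)_c$ verification in the polynomial case), replacing the Sobolev embedding by the Moser--Trudinger inequality (Lemma~3) at the appropriate spots, and being more careful about the one place where exponential growth genuinely bites. Let $\{u_n\}\subset W_0^{1,N}(\Omega)$ be a Cerami sequence at level $c$, so that
\[
(1+\|u_n\|)\,\|DJ(u_n)\|\to 0,\qquad J(u_n)\to c.
\]
First I would show $\{u_n\}$ is bounded, arguing by contradiction: assume $\|u_n\|\to\infty$ and set $v_n=u_n/\|u_n\|$, so $\|v_n\|=1$ and (along a subsequence) $v_n\rightharpoonup v$ in $W_0^{1,N}(\Omega)$, $v_n\to v$ in every $L^q(\Omega)$, $v_n\to v$ a.e., and likewise $v_n^+\rightharpoonup v^+$. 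Exactly as in Lemma~6, using $(L2)$ together with Fatou's lemma on the set $\Omega^+=\{v^+>0\}$ and the identity $\|u_n\|^N = Nc + N\int_\Omega F(x,u_n^+)\,dx + o(1)$ coming from $J(u_n)\to c$ and $DJ(u_n)u_n=o(1)$, one derives a contradiction unless $v^+=0$ a.e., so $v\le 0$ a.e. Then pick $t_n\in[0,1]$ with $J(t_nu_n)=\max_{t\in[0,1]}J(tu_n)$.

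The crucial step, and the one that differs from the polynomial case, is showing $J(t_nu_n)\to\infty$. In the polynomial proof this used the estimate $F(x,s)\le C|s| + R^{-p^*}|s|^{p^*}$ followed by $J(Rv_n)\ge R^p - o(1) - C(\Omega)$. Here I would instead fix $R>0$, observe that for $n$ large $J(t_nu_n)\ge J(Rv_n)$ since $R/\|u_n\|\in(0,1)$, and estimate $\int_\Omega F(x,Rv_n^+)\,dx$. By $(SCE)$, for any $\alpha>0$ and $q>N$ there is $C=C(\alpha,q)$ with $F(x,s)\le C|s| + C\exp(\alpha|s|^{N/(N-1)})|s|^q$; writing $Rv_n^+ = R\|v_n^+\|\cdot (v_n^+/\|v_n^+\|)$ with $\|v_n^+\|\le 1$, one chooses $\alpha$ small enough (depending on $R$, and using $\|v_n^+\|\le 1$) so that $\alpha R^{N/(N-1)}r < \alpha_N$ for some $r>1$ close to $1$; then Hölder plus Lemma~3 bounds $\int_\Omega \exp(\alpha|Rv_n^+|^{N/(N-1)})|Rv_n^+|^q\,dx$ by $C(R)\big(\int_\Omega|v_n^+|^{r'q}\big)^{1/r'}$. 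Since $v_n^+\to v^+=0$ in $L^{r'q}(\Omega)$ and also in $L^1(\Omega)$, letting $n\to\infty$ gives $\int_\Omega F(x,Rv_n^+)\,dx\to 0$, hence $\liminf_n J(Rv_n)\ge R^N/N$. Letting $R\to\infty$ yields $J(t_nu_n)\to\infty$. The main obstacle is precisely calibrating $\alpha$ against $R$ and $\|v_n^+\|\le 1$ so the Moser--Trudinger exponent stays subcritical uniformly in $n$; this works only because $(SCE)$ allows \emph{arbitrarily small} $\alpha$ while $\|v_n^+\|$ is bounded by~$1$, which is the whole point of the subcritical hypothesis.

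Next, exactly as in Lemma~6: since $t_n\in(0,1)$ (ruling out $t_n=0,1$ using $J(0)=0$, $J(u_n)\to c$, and $J(t_nu_n)\to\infty$), we have $DJ(t_nu_n)(t_nu_n)=0$, while from the Cerami condition $\int_\Omega[f(x,u_n)u_n - NF(x,u_n)]\,dx = Nc + o(1)$. Then $(L3)$ gives, with $H(x,u)=uf(x,u)-NF(x,u)$,
\[
NJ(t_nu_n) = \int_\Omega H(x,t_nu_n)\,dx \le \theta\int_\Omega H(x,u_n)\,dx + C_*|\Omega| = O(1),
\]
contradicting $J(t_nu_n)\to\infty$. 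Hence $\{u_n\}$ is bounded, and up to a subsequence $u_n\rightharpoonup u$ in $W_0^{1,N}(\Omega)$, $u_n\to u$ in $L^q$ for all $q<\infty$, $u_n\to u$ a.e.

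Finally I would upgrade weak to strong convergence. The one estimate that must be redone for $p=N$ is $\int_\Omega f(x,u_n)(u_n-u)\,dx\to 0$. Using $(SCE)$, for given $\varepsilon$ and $q>N$ there is $C(\varepsilon)$ with $|f(x,s)|\le C(\varepsilon) + C(\varepsilon)\exp(\varepsilon|s|^{N/(N-1)})|s|^{q-1}$; since $\{u_n\}$ is bounded in $W_0^{1,N}$, choose $\varepsilon$ small and $r>1$ close to $1$ so that $\varepsilon r\sup_n\|u_n\|^{N/(N-1)}<\alpha_N$, so that by Hölder and Lemma~3 the family $\{\exp(\varepsilon|u_n|^{N/(N-1)})|u_n|^{q-1}\}$ is bounded in some $L^{r}(\Omega)$, and then Hölder against $\|u_n-u\|_{L^{r'}}\to 0$ closes it; the $C(\varepsilon)$-term is controlled by $\|u_n-u\|_{L^1}\to 0$. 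Combining with $\langle DJ(u_n),u_n-u\rangle\to 0$ (from \eqref{3.1}) gives
\[
\int_\Omega\big(|\nabla u_n|^{N-2}\nabla u_n - |\nabla u|^{N-2}\nabla u\big)\cdot(\nabla u_n - \nabla u)\,dx \to 0,
\]
and the elementary inequality $2^{2-N}|b-a|^N \le \langle |b|^{N-2}b - |a|^{N-2}a, b-a\rangle$ forces $\nabla u_n\to\nabla u$ in $L^N(\Omega)$, i.e. $u_n\to u$ strongly in $W_0^{1,N}(\Omega)$, so $J$ satisfies $(C)_c$.
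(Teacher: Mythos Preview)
Your proof is correct and follows the same overall strategy as the paper's. The only notable difference is in the exponential estimates: where you insert an extra polynomial factor $|s|^q$ into the bound on $F$ (and on $f$) and then use $v_n^+\to 0$ in $L^{r'q}$ to make the integral vanish, the paper uses the simpler bound $F(x,s)\le C|s|+\exp\big(\tfrac{\alpha_N}{R^{N/(N-1)}}|s|^{N/(N-1)}\big)$, so that the exponential integral is merely bounded by the fixed Moser--Trudinger constant $C(\Omega)$---which still suffices since $R^N - NC(\Omega)\to\infty$---and analogously takes $|f(x,s)|\le c_K\exp\big(\tfrac{\alpha_N}{2K^{N/(N-1)}}|s|^{N/(N-1)}\big)$ with a single Cauchy--Schwarz in the strong-convergence step.
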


\begin{proof}
Let $\left\{  u_{n}\right\}  $ be a Cerami sequence in $W_{0}^{1,N}\left(
\Omega\right)  $ such that
\begin{align*}
\left(  1+\left\Vert u_{n}\right\Vert \right)  \left\Vert DJ(u_{n}%
)\right\Vert  &  \rightarrow0\\
J(u_{n})  &  \rightarrow c
\end{align*}
i.e.
\begin{align}
\left(  1+\left\Vert u_{n}\right\Vert \right)  \left\vert \int_{\Omega
}\left\vert \nabla u_{n}\right\vert ^{N-2}\nabla u_{n}\nabla vdx-\int_{\Omega
}f(x,u_{n})vdx\right\vert  &  \leq\varepsilon_{n}\left\Vert v\right\Vert
\label{4.1}\\
\frac{1}{N}\left\Vert u_{n}\right\Vert ^{N}-\int_{\Omega}F(x,u_{n})dx  &
\rightarrow c\nonumber
\end{align}
where $\varepsilon_{n}\overset{n\rightarrow\infty}{\rightarrow}0$. We will
show that $\left\{  u_{n}\right\}  $ is bounded. Again, suppose that
\begin{equation}
\left\Vert u_{n}\right\Vert \rightarrow\infty\label{4.2}%
\end{equation}
Setting
\[
v_{n}=\frac{u_{n}}{\left\Vert u_{n}\right\Vert }%
\]
then $\left\Vert v_{n}\right\Vert =1$. We can then suppose that $v_{n}%
\rightharpoonup v$ in $W_{0}^{1,N}\left(  \Omega\right)  $ (up to a
subsequence) . We may similarly show that $v_{n}^{+}\rightharpoonup0$ in
$W_{0}^{1,N}\left(  \Omega\right)  $, where $w^{+}=\max\left\{  w,0\right\}
.$

Again, let $t_{n}\in\left[  0,1\right]  $ such that
\[
J\left(  t_{n}u_{n}\right)  =\underset{t\in\left[  0,1\right]  }{\max}J\left(
tu_{n}\right)
\]
For any given $R>0$, by $(SCE)$, there exists $C=C(R)>0$ such that
\begin{equation}
F(x,s)\leq C\left\vert s\right\vert +\exp\left(  \frac{\alpha_{N}}%
{R^{N/(N-1)}}s^{N/(N-1)}\right)  ,~\forall\left(  x,s\right)  \in\Omega\times%
\mathbb{R}
. \label{4.6}%
\end{equation}
Also since $\left\Vert u_{n}\right\Vert \rightarrow\infty$, we have
\begin{equation}
J\left(  t_{n}u_{n}\right)  \geq J\left(  \frac{R}{\left\Vert u_{n}\right\Vert
}u_{n}\right)  =J\left(  Rv_{n}\right)  \label{4.7}%
\end{equation}
and by (\ref{4.6}), $\left\Vert v_{n}\right\Vert =1$ and the fact that
$\int_{\Omega}F\left(  x,v_{n}\right)  dx=\int_{\Omega}F\left(  x,v_{n}%
^{+}\right)  dx$, we get
\begin{align}
NJ\left(  Rv_{n}\right)   &  \geq R^{N}-NCR\int_{\Omega}\left\vert v_{n}%
^{+}(x)\right\vert dx-N\int_{\Omega}\exp\left(  \alpha_{N}\left\vert v_{n}%
^{+}(x)\right\vert ^{N/(N-1)}\right)  dx\label{4.8}\\
&  \geq R^{N}-NCR\int_{\Omega}\left\vert v_{n}^{+}(x)\right\vert
dx-N\int_{\Omega}\exp\left(  \alpha_{N}\left\vert v_{n}(x)\right\vert
^{N/(N-1)}\right)  dx\nonumber
\end{align}
Since $\left\Vert v_{n}\right\Vert =1,$ we have that $\int_{\Omega}\exp\left(
\alpha_{N}\left\vert v_{n}(x)\right\vert ^{N/(N-1)}\right)  dx$ is bounded by
a universal constant $C\left(  \Omega\right)  >0$ by the Moser-Trudinger
inequality (Lemma 3). Also, since $v_{n}^{+}\rightharpoonup0$ in $W_{0}%
^{1,N}\left(  \Omega\right)  $, we have that $\int_{\Omega}\left\vert
v_{n}^{+}(x)\right\vert dx\rightarrow0$. Thus using (\ref{4.7}) and letting
$n\rightarrow\infty$ in (\ref{4.8}), and then letting $R\rightarrow\infty$, we
get
\begin{equation}
J\left(  t_{n}u_{n}\right)  \rightarrow\infty\label{4.9}%
\end{equation}
Note that $J(0)=0$ and $J(u_{n})\rightarrow c$, we can suppose that $t_{n}%
\in\left(  0,1\right)  $. Thus since $DJ(t_{n}u_{n})t_{n}u_{n}=0,$%
\[
t_{n}^{N}\left\Vert u_{n}\right\Vert ^{N}=\int_{\Omega}f\left(  x,t_{n}%
u_{n}\right)  t_{n}u_{n}dx
\]
So by $(L3):$
\begin{align*}
NJ\left(  t_{n}u_{n}\right)   &  =t_{n}^{N}\left\Vert u_{n}\right\Vert
^{N}-N\int_{\Omega}F\left(  x,t_{n}u_{n}\right)  dx\\
&  =\int_{\Omega}\left[  f\left(  x,t_{n}u_{n}\right)  t_{n}u_{n}-NF\left(
x,t_{n}u_{n}\right)  \right]  dx\\
&  \leq\theta\int_{\Omega}\left[  f\left(  x,u_{n}\right)  u_{n}-NF\left(
x,u_{n}\right)  \right]  dx+O(1)
\end{align*}
Also, by (\ref{3.1}), we have%
\begin{align*}
\int_{\Omega}\left[  f\left(  x,u_{n}\right)  u_{n}-NF\left(  x,u_{n}\right)
\right]  dx  &  =\left\Vert u_{n}\right\Vert ^{N}+Nc-\left\Vert u_{n}%
\right\Vert ^{N}+o(1)\\
&  =Nc+o(1)
\end{align*}
which is a contraction to (\ref{3.9}). This proves that $\left\{
u_{n}\right\}  $ is bounded in $W_{0}^{1,N}\left(  \Omega\right)  $. Without
loss of generality, suppose that
\[
\left\{
\begin{array}
[c]{l}%
\left\Vert u_{n}\right\Vert \leq K\\
u_{n}\rightharpoonup u\text{ in }W_{0}^{1,N}\left(  \Omega\right) \\
u_{n}\left(  x\right)  \rightarrow u\left(  x\right)  \text{ a.e. }\Omega\\
u_{n}\longrightarrow u\text{ in }L^{p}\left(  \Omega\right)  ,~\forall p\geq1.
\end{array}
\right.
\]
Now, since $f$ has the subcritical exponential growth (SCE) on $\Omega$, we
can find a constant $c_{K}>0$ such that
\[
f\left(  x,s\right)  \leq c_{K}\exp\left(  \frac{\alpha_{N}}{2K^{N/(N-1)}%
}\left\vert s\right\vert ^{N/(N-1)}\right)  ,~\forall\left(  x,s\right)
\in\Omega\times%
\mathbb{R}
\]
then by the Moser-Trudinger inequality,
\begin{align*}
\left\vert \int_{\Omega}f\left(  x,u_{n}\right)  \left(  u_{n}-u\right)
dx\right\vert  &  \leq\int_{\Omega}\left\vert f\left(  x,u_{n}\right)  \left(
u_{n}-u\right)  \right\vert dx\\
&  \leq\left(  \int_{\Omega}\left\vert f\left(  x,u_{n}\right)  \right\vert
^{2}dx\right)  ^{1/2}\left(  \int_{\Omega}\left\vert u_{n}-u\right\vert
^{2}dx\right)  ^{1/2}\\
&  \leq C\left(  \int_{\Omega}\exp\left(  \frac{\alpha_{N}}{K^{N/(N-1)}%
}\left\vert u_{n}\right\vert ^{N/(N-1)}\right)  dx\right)  ^{1/2}\left\Vert
u_{n}-u\right\Vert _{2}\\
&  \leq C\left(  \int_{\Omega}\exp\left(  \frac{\alpha_{N}}{K^{N/(N-1)}%
}\left\Vert u_{n}\right\Vert ^{N/(N-1)}\left\vert \frac{u_{n}}{\left\Vert
u_{n}\right\Vert }\right\vert ^{N/(N-1)}\right)  dx\right)  ^{1/2}\left\Vert
u_{n}-u\right\Vert _{2}\\
&  \leq C\left\Vert u_{n}-u\right\Vert _{2}\overset{n\rightarrow\infty
}{\rightarrow}0.
\end{align*}
Similarly, since $u_{n}\rightharpoonup u$ in $W_{0}^{1,N}\left(
\Omega\right)  ,~\int_{\Omega}f\left(  x,u\right)  \left(  u_{n}-u\right)
dx\rightarrow0$. Thus we can conclude that
\begin{equation}
\int_{\Omega}\left(  f\left(  x,u_{n}\right)  -f\left(  x,u\right)  \right)
\left(  u_{n}-u\right)  dx\overset{n\rightarrow\infty}{\rightarrow}0
\label{4.10}%
\end{equation}
Also, by (\ref{4.1}) we have
\begin{equation}
\left\langle DJ(u_{n})-DJ(u),(u_{n}-u)\right\rangle \overset{n\rightarrow
\infty}{\rightarrow}0 \label{4.11}%
\end{equation}
From (\ref{3.10}) and (\ref{3.11}), we get
\[
\int_{\Omega}\left(  \left\vert \nabla u_{n}\right\vert ^{N-2}\nabla
u_{n}-\left\vert \nabla u\right\vert ^{N-2}\nabla u\right)  \left(  \nabla
u_{n}-\nabla u\right)  \rightarrow0
\]
Using an elementary inequality
\[
2^{2-N}\left\vert b-a\right\vert ^{N}\leq\left\langle \left\vert b\right\vert
^{N-2}b-\left\vert a\right\vert ^{N-2}a,b-a\right\rangle ,~\forall a,b\in%
\mathbb{R}
^{N}%
\]
we can deduce that
\[
\nabla u_{n}\rightarrow\nabla u\text{ in }L^{N}\left(  \Omega\right)
\]
So we have $u_{n}\overset{n\rightarrow\infty}{\rightarrow}u$ strongly in
$W_{0}^{1,N}\left(  \Omega\right)  $ which shows that $J$ satisfies $(C)_{c}$.
\end{proof}

\subsection{Proof of Theorem 2}

Again, by Lemma 8 and Mountain Pass Theorem (Lemma 1), we can easily deduce that the
problem (\ref{1.2}) has a nontrivial weak solution.

\section{The critical exponential growth-Proof of Theorem 3}

In this section, we study the problem (\ref{1.2}) where $\Omega$ is the
bounded domain in $%
\mathbb{R}
^{N}$ and $f$ has the critical growth $(CR),$ say, at $\alpha_{0}>0$. Recall
that then we have
\[
\underset{u\rightarrow+\infty~}{\lim}\frac{\left\vert f\left(  x,u\right)
\right\vert }{\exp\left(  \alpha\left\vert u\right\vert ^{N/(N-1)}\right)
}=0,\text{ uniformly on }x\in\Omega,~\forall\alpha>\alpha_{0}%
\]
and
\[
\underset{u\rightarrow+\infty~}{\lim}\frac{\left\vert f\left(  x,u\right)
\right\vert }{\exp\left(  \alpha\left\vert u\right\vert ^{N/(N-1)}\right)
}=+\infty,\text{ uniformly on }x\in\Omega,~\forall\alpha<\alpha_{0}%
\]
We now start the proof of Theorem 3.

\begin{proof}
Similar to the previous two sections, by our conditions, we see that our
Euler-Lagrange function associated to the problem (\ref{1.2}) has the
Palais-Smale geometry properties. Now we consider the Moser functions:%
\[
\widetilde{M}_{n}(x)=\omega_{N-1}^{-1/N}\left\{
\begin{array}
[c]{l}%
\left(  \log n\right)  ^{(N-1)/N},~0\leq\left\vert x\right\vert \leq1/n\\
\frac{\log(1/\left\vert x\right\vert )}{\left(  \log n\right)  ^{1/N}%
},~1/n\leq\left\vert x\right\vert \leq1\\
0,~~~~\ \ \ \ \ ~\ 1\leq\left\vert x\right\vert
\end{array}
\right.
\]
We see that $\widetilde{M}_{n}\in W_{0}^{1,N}\left(  B_{1}(0)\right)  $ and
$\left\Vert \widetilde{M}_{n}\right\Vert =1,~\forall n\in%
\mathbb{N}
.$ Since $d$ is the inner radius of $\Omega$, we can find $x_{0}\in\Omega$
such that $B_{d}(x_{0})\subset\Omega$. Letting $M_{n}(x)=\widetilde{M}%
_{n}(\frac{x-x_{0}}{d})$, which are in $W_{0}^{1,N}\left(  \Omega\right)
,\left\Vert M_{n}\right\Vert =1$ and $suppM_{n}=B_{d}(x_{0})$. As in the proof
of Theorem 1.3 in \cite{FMR}, we can conclude that
\[
\max\left\{  J(tM_{n}):t\geq0\right\}  <\frac{1}{N}\left(  \frac{\alpha_{N}%
}{\alpha_{0}}\right)  ^{N-1}%
\]
It can be checked easily by a similar argument to that in the previous section
that $J$ satisfies the condition $(i)$ and $(ii)$ of Lemma 2 (See Lemmas 6 and
7). So, we can find a Cerami sequence $\left\{  u_{n}\right\}  $ in
$W_{0}^{1,N}\left(  \Omega\right)  $ such that
\begin{align}
\left(  1+\left\Vert u_{n}\right\Vert \right)  \left\Vert DJ(u_{n}%
)\right\Vert  &  \rightarrow0\label{5.0}\\
J(u_{n})  &  \rightarrow C_{M}<\frac{1}{N}\left(  \frac{\alpha_{N}}{\alpha
_{0}}\right)  ^{N-1}\nonumber
\end{align}
We again want to show that $\left\{  u_{n}\right\}  $ is bounded in
$W_{0}^{1,N}\left(  \Omega\right)  $. Indeed, if we suppose that $\left\{
u_{n}\right\}  $ is unbounded, then using the same argument to that used in
the previous two sections, we can get that
\[
v_{n}^{+}\rightharpoonup0\text{ in }W_{0}^{1,N}\left(  \Omega\right)  \text{
where }v_{n}=\frac{u_{n}}{\left\Vert u_{n}\right\Vert }.
\]
Let $t_{n}\in\left[  0,1\right]  $ such that
\[
J\left(  t_{n}u_{n}\right)  =\underset{t\in\left[  0,1\right]  }{\max}J\left(
tu_{n}\right)
\]
Let $R\in\left(  0,\left(  \frac{\alpha_{N}}{\alpha_{0}}\right)
^{(N-1)/N}\right)  $ and choose $\varepsilon=\frac{\alpha_{N}}{R^{N/(N-1)}}-$
$\alpha_{0}>0$, by condition $(CG)$, there exists $C>0$ such that
\begin{equation}
F(x,s)\leq C\left\vert s\right\vert +\left\vert \frac{\alpha_{N}}{R^{N/(N-1)}%
}-\alpha_{0}\right\vert \exp\left(  \left(  \alpha_{0}+\varepsilon\right)
s^{N/(N-1)}\right)  ,~\forall\left(  x,s\right)  \in\Omega\times%
\mathbb{R}
. \label{5.1}%
\end{equation}
Since $\left\Vert u_{n}\right\Vert \rightarrow\infty$, we have
\begin{equation}
J\left(  t_{n}u_{n}\right)  \geq J\left(  \frac{R}{\left\Vert u_{n}\right\Vert
}u_{n}\right)  =J\left(  Rv_{n}\right)  \label{5.2}%
\end{equation}
and by (\ref{5.1}) and noticing $\left\Vert v_{n}\right\Vert =1$, we have
\begin{equation}
NJ\left(  Rv_{n}\right)  \geq R^{N}-NCR\int_{\Omega}\left\vert v_{n}%
^{+}(x)\right\vert dx-N\left\vert \frac{\alpha_{N}}{R^{N/(N-1)}}-\alpha
_{0}\right\vert \int_{\Omega}\exp\left(  \left(  \alpha_{0}+\varepsilon
\right)  R^{N/(N-1)}v_{n}^{N/(N-1)}(x)\right)  dx \label{5.3}%
\end{equation}
By the Moser-Trudinger inequality (Lemma 3),
\[
\int_{\Omega}\exp\left(  \left(  \alpha_{0}+\varepsilon\right)  R^{N/(N-1)}%
v_{n}^{N/(N-1)}(x)\right)  dx=\int_{\Omega}\exp\left(  \alpha_{N}%
v_{n}^{N/(N-1)}(x)\right)  dx
\]
is bounded by an universal constant $C\left(  \Omega\right)  >0$ thanks to the
choice of $\varepsilon$. Also, since $v_{n}^{+}\rightharpoonup0$ in
$W_{0}^{1,N}\left(  \Omega\right)  $, $\int_{\Omega}\left\vert v_{n}%
^{+}(x)\right\vert dx\rightarrow0$. Thus if we let $n\rightarrow\infty$ in
(\ref{5.3}), and then let $R\rightarrow\left[  \left(  \frac{\alpha_{N}%
}{\alpha_{0}}\right)  ^{(N-1)/N}\right]  ^{-}$ and using (\ref{5.2}), we get
\begin{equation}
\underset{n\rightarrow\infty}{\lim\inf}J\left(  t_{n}u_{n}\right)  \geq
\frac{1}{N}\left(  \frac{\alpha_{N}}{\alpha_{0}}\right)  ^{N-1}>C_{M}\text{.}
\label{5.4}%
\end{equation}
Note that $J(0)=0$ and $J(u_{n})\rightarrow C_{M}$, we can suppose that
$t_{n}\in\left(  0,1\right)  $. Thus since $DJ(t_{n}u_{n})t_{n}u_{n}=0,$%
\[
t_{n}^{N}\left\Vert u_{n}\right\Vert ^{N}=\int_{\Omega}f\left(  x,t_{n}%
u_{n}\right)  t_{n}u_{n}dx
\]
Also, by (\ref{5.0})%
\begin{align*}
\int_{\Omega}\left[  f\left(  x,u_{n}\right)  u_{n}-NF\left(  x,u_{n}\right)
\right]  dx  &  =\left\Vert u_{n}\right\Vert ^{N}+NC_{M}-\left\Vert
u_{n}\right\Vert ^{N}+o(1)\\
&  =NC_{M}+o(1)
\end{align*}
So by $(L3):$
\begin{align*}
NJ\left(  t_{n}u_{n}\right)   &  =t_{n}^{N}\left\Vert u_{n}\right\Vert
^{N}-N\int_{\Omega}F\left(  x,t_{n}u_{n}\right)  dx\\
&  =\int_{\Omega}\left[  f\left(  x,t_{n}u_{n}\right)  t_{n}u_{n}-NF\left(
x,t_{n}u_{n}\right)  \right]  dx\\
&  \leq\int_{\Omega}\left[  f\left(  x,u_{n}\right)  u_{n}-NF\left(
x,u_{n}\right)  \right]  dx\\
&  =NC_{M}+o(1)
\end{align*}
which is a contraction to (\ref{5.4}). This proves that $\left\{
u_{n}\right\}  $ is bounded in $W_{0}^{1,N}\left(  \Omega\right)  $. Now,
following the proof of Lemma 4 in \cite{O}, we can prove that $u$ is a weak
solution of (\ref{1.2}). So the last remaining point that we need to show is
the nontriviality of $u.$ However, we can get this thanks to our assumption
$(L6)$. Indeed, suppose $u=0$. Arguing as in \cite{O}, we get $f(x,u_{n}%
)\rightarrow0$ in $L^{1}\left(  \Omega\right)  $. Thanks to $(L6)$,
$F(x,u_{n})\rightarrow0$ in $L^{1}\left(  \Omega\right)  $ and we can get
\[
\underset{n\rightarrow\infty}{\lim}\left\Vert u_{n}\right\Vert ^{N}%
=NC_{M}<\left(  \frac{\alpha_{N}}{\alpha_{0}}\right)  ^{N-1}%
\]
and again, follows the proof in \bigskip\cite{O}, we have a contradiction. The
proof is now completed.
\end{proof}

\end{document}